\documentclass[notitlepage,11pt,reqno]{amsart}
\usepackage{amsmath, amssymb, amsfonts, amstext, amsthm, color,tocvsec2}
\usepackage[breaklinks=true]{hyperref}
\usepackage[mathscr]{euscript}
\usepackage{enumerate}
\usepackage[letterpaper]{geometry}
\geometry{hmargin={1in, 1in}, vmargin={1in, 1in}}
\usepackage{float}
\usepackage{pgf,tikz,ulem}

\newtheorem{thm}{Theorem}[section]
\newtheorem{defn}[thm]{Definition}%[section]
\newtheorem{lem}[thm]{Lemma}
\newtheorem{cor}[thm]{Corollary}
\newtheorem{prop}[thm]{Proposition}
\newtheorem{ex}[thm]{Example}
\newtheorem{rem}[thm]{Remark}

\DeclareMathOperator{\qec}{QEC}
\newcommand{\dg}{\ensuremath{D_G}}
\def\be{\begin{equation}}
\def\ee{\end{equation}}
\def\bea{\begin{eqnarray}}
\def\eea{\end{eqnarray}}
\textheight     240 true mm \oddsidemargin   -0.2cm
\evensidemargin  -0.2cm

\numberwithin{equation}{section}
\begin{document}
	
	\title[Quadratic Embedding Constants of graphs: Bounds and distance spectra]{Quadratic Embedding Constants of graphs: Bounds and distance spectra}

	\author{Projesh Nath Choudhury}
	\address[P.N.~Choudhury]{Discipline of Mathematics, Indian Institute of Technology Gandhinagar,
		Palaj, Gandhinagar 382355, India}
	\email{\tt projeshnc@iitgn.ac.in}

	\author{Raju Nandi}
	\address[R.~Nandi]{Discipline of Mathematics, Indian Institute of Technology Gandhinagar,
		Palaj, Gandhinagar 382355, India}
	\email{\tt raju.n@iitgn.ac.in, rajunandirkm@gmail.com}
	
	\date{\today}
	
	\begin{abstract}
		The quadratic embedding constant (QEC) of a finite, simple, connected graph $G$ is the maximum of the quadratic form of the distance matrix of $G$ on the subset of the unit sphere orthogonal to the all-ones vector. The study of these QECs was motivated by the classical work of Schoenberg on quadratic embedding of metric spaces [\textit{Ann.\ of\ Math.}, 1935] and [\textit{Trans.\ Amer.\ Math.\ Soc.}, 1938].  In this article, we provide sharp upper and  lower bounds for the  QEC of trees. We next explore the relation between  distance spectra and quadratic embedding constants of graphs -- and show two further results: $(i)$ We show that the quadratic embedding constant of a graph is zero if and only if its second largest distance eigenvalue is zero. $(ii)$ We identify a new subclass of nonsingular graphs whose  QEC is the second largest distance eigenvalue. Finally, we show that the QEC of the cluster of an arbitrary graph $G$ with either a complete or star graph can be computed in terms of the QEC of $G$. As an application of this result, we provide new families of examples of graphs of QE class.
	\end{abstract}
	
	\keywords{Distance matrix, distance spectra of graphs, euclidean distance geometry, quadratic embedding constant, tree, double star, cluster of graphs}
	
	\subjclass[2020]{05C12 (primary); %
		05C50, 05C76 (secondary)}

%\begin{center}
%{\bf \large Quadratic Embedding Constant of Trees}\\
%
%\vspace{3cm}
%{\bf Projesh Nath Choudhury and Raju Nandi}\\
%\vspace{.5cm}
%Discipline of Mathematics\\
%Indian Institute of Technology Gandhinagar\\
%Gandhinagar 382355\\
%India \\
%\end{center}
%  
%\vspace{3cm}  
%\noindent{\bf Abstract :} \\
%
%
%
%\vskip.25in
%\noindent\textit{Keywords:} 
%Distance matrix, distance spectra of graphs, euclidean distance geometry, quadratic embedding constant, tree, double star, cluster of graphs, corona graph.
%
%%\vskip.05in
%\noindent\textit{AMS Subject Classifications:} 05C12, 05C50, 05C76.
%
%\newpage
 \maketitle
 
\section{Introduction and main results}
\textit{Throughout this paper, all graphs are assumed  to be finite, simple, connected, and unweighted. Given a graph $G=(V,E)$, we define $C(V):$ the space of all $\mathbb{R}$-valued functions on $V$, $e:$ the constant function in $C(V)$ taking value $1$,  and $J :$ matrix with all entries one. The canonical inner product on $C(V)\cong \mathbb{R}^{|V|}$ will be denoted by  $\langle .,. \rangle$. Also, given a graph $G$ with vertex set $V$, $D_G$ denotes its distance matrix, with the $(v,w)$ entry given by the length of a shortest path connecting $v \neq w \in V$, and $(D_G)_{vv} = 0\ \forall v \in V$. The eigenvalues of $D_G$ are arranged as $\lambda _1 (G)>\lambda _2(G) \geq \lambda _3(G)\geq \cdots \geq \lambda _{|V|}(G).$}
\medskip

The aim of this work is to study a graph-theoretic constant arising from Euclidean distance geometry.  A well-studied problem in distance geometry asks whether given a graph $G=(V,E)$ there exists a function $\psi$ from $V$ into a Euclidean space $\mathcal{H}$  such that $d(i,j)=\  \parallel \psi (i) - \psi (j) \parallel ^2$ for all $i,j\in V$, where $\parallel . \parallel $ denotes the norm of $\mathcal{H}$. Such a map $\psi: V \rightarrow \mathcal{H}$ is called a {\it quadratic embedding } of $G$ and if $G$ admits a quadratic embedding, we say that $G$ is of \textit{QE class}. Graphs of QE class have rich applications in quantum probability and non-commutative harmonic analysis \cite{mbozejko, bozejko, graham, haagerup,hora,nobata,obata}.
 In \cite{schoenberg}, Schoenberg showed a striking characterization of the $QE$ class:
\begin{thm}\label{schointro}
Let $G=(V,E)$ be a  graph with at least two vertices. Then $G$ is of QE class if and only if $\dg$ is conditionally negative definite, that is  $\langle f,\dg f \rangle \leq 0$  for all\  $f\in C(V)$\ \ with\  $\langle e,f \rangle = 0$.
\end{thm}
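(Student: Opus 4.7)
The plan is to prove Schoenberg's theorem in two directions, relying on the classical polarization identity that relates squared distances to inner products via a Gram matrix.

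For the forward direction, suppose $\psi:V\to\mathcal{H}$ realizes $d(i,j)=\|\psi(i)-\psi(j)\|^2$. For any $f\in C(V)$ with $\langle e,f\rangle=0$, I would expand
\[
\langle f,D_G f\rangle=\sum_{i,j\in V}f(i)f(j)\bigl(\|\psi(i)\|^2+\|\psi(j)\|^2-2\langle\psi(i),\psi(j)\rangle\bigr).
\]
Each of the first two sums contains a factor $\sum_i f(i)=0$, so they vanish, and what remains collapses to $-2\bigl\|\sum_{i\in V} f(i)\psi(i)\bigr\|^2\le 0$, establishing conditional negative definiteness of $D_G$.

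For the converse, I would invoke Schoenberg's Gram-matrix construction. Fix a base vertex $v_0\in V$ and define the $|V|\times|V|$ matrix
\[
B_{ij}:=\tfrac{1}{2}\bigl(d(i,v_0)+d(j,v_0)-d(i,j)\bigr).
\]
A direct expansion gives, for any $g\in C(V)$,
\[
\langle g,Bg\rangle=\langle e,g\rangle\sum_i g(i)\,d(i,v_0)-\tfrac{1}{2}\langle g,D_G g\rangle.
\]
Moreover, the $v_0$-th row and column of $B$ vanish, so $\langle g,Bg\rangle$ is unchanged if $g(v_0)$ is redefined at will. Using this freedom, I would set $f(v_0):=-\sum_{i\ne v_0}g(i)$, producing $f\in C(V)$ with $\langle e,f\rangle=0$ and $\langle g,Bg\rangle=\langle f,Bf\rangle=-\tfrac{1}{2}\langle f,D_G f\rangle\ge 0$ by the hypothesis. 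Hence $B$ is positive semidefinite; I would then take a factorization $B=C^{\top}C$ and let $\psi(i)$ be the $i$-th column of $C$. Since $B_{ii}=d(i,v_0)$, the identity $\|\psi(i)-\psi(j)\|^2=B_{ii}+B_{jj}-2B_{ij}=d(i,j)$ yields the required quadratic embedding.

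The main obstacle is precisely the step that deduces positive semidefiniteness of $B$ from conditional negative definiteness of $D_G$, since the latter hypothesis controls only vectors orthogonal to $e$. The crucial observation that unlocks it is that $B$ annihilates the coordinate at $v_0$, which lets one convert an arbitrary $g$ into a test function $f$ with $\langle e,f\rangle=0$ without altering $\langle g,Bg\rangle$. Once this is in place, the remaining work is algebraic expansion and the standard spectral (or Cholesky) factorization of a positive semidefinite matrix.
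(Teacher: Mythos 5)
Your argument is correct: the forward direction is the standard polarization computation (the two ``diagonal'' sums vanish because $\langle e,f\rangle=0$, leaving $-2\bigl\|\sum_i f(i)\psi(i)\bigr\|^2\le 0$), and the converse correctly exploits that the $v_0$-th row and column of $B_{ij}=\tfrac12\bigl(d(i,v_0)+d(j,v_0)-d(i,j)\bigr)$ vanish, so that an arbitrary test vector $g$ can be modified at the $v_0$ coordinate into an $f$ orthogonal to $e$ without changing $\langle g,Bg\rangle$, giving $B\succeq 0$ and hence the embedding via $B=C^{\top}C$ and $B_{ii}+B_{jj}-2B_{ij}=d(i,j)$. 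Note that the paper does not prove this statement at all --- it is quoted from Schoenberg's 1935 paper --- and what you have written is precisely the classical Gram-matrix argument underlying that citation, so there is nothing to reconcile; the proof is complete and sound.
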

	
Motivated by Schoenberg's result, the authors in \cite{zakiyyah} introduced the notion of {\it quadratic embedding constant} of $G$ denoted and defined as
\begin{center}
$\qec(G):= \max\{\langle f,\dg f \rangle ;\  f\in C(V),\  \langle f,f \rangle = 1,\  \langle e,f \rangle = 0 \}.$
\end{center}
In particular, Schoenberg's result says a graph $G$ is of QE class if and only if $\qec (G)\leq 0$. $\qec(G)$ has since been studied by several authors  \cite{baskoro,lou,wmlotkowski,mo,zakiyyah}. For instance, the quadratic embedding constants of $K_n$, $C_n$ and complete multipartite graphs are given in \cite{bata,zakiyyah}. E.g., $\qec(K_{m,m})=m-2$; in particular $K_{m,m}$ is not of QE class for $m\geq 3$.

 We now consider trees. For a path graph $P_n$ on $n$ vertices, the authors in \cite{mo} derived a lower and upper bound for $\qec (P_n)$. M{\l}otkowski \cite{wmlotkowski} recently extended this work by providing the exact value: $\qec (P_n)=\frac{-1}{1+\cos (\pi/n)}$. In \cite{zakiyyah}, authors studied quadratic embedding of graphs and proved that any tree $T$ is of QE class. Also, from \cite{wmlotkowski, zakiyyah}, $\qec (P_n) < \qec (K_{1,n-1})=-2/n$, for $n\geq 4$. Now our first main result shows that the quadratic embedding constants of any tree on $n\geq 4$ vertices lies between $\qec (P_n)$ and $\qec (K_{1,n-1})$. 
\begin{thm}\label{ulbound}
	Let $T_n$ be a tree on $n\geq 4$ vertices. Then
	$\qec(P_n)\leq \qec(T_n)\leq \qec(K_{1,n-1})$. Moreover, the left side equality holds if and only if $T_n=P_n$  while the right hand equality holds if and only if $T_n=K_{1,{n-1}}$.
\end{thm}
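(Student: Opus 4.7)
My approach uses the Lagrangian characterization of the QEC: at the maximizer $f^*$ of the quadratic form on $\{f\perp e,\ \|f\|=1\}$, one has $D_{T_n}f^*=\mu f^*+ce$ with $\mu=\qec(T_n)$. For a tree, $d(u,v)$ equals the number of edges on the unique $u$-to-$v$ path, so subtracting the Lagrange equations at two adjacent vertices $u,v$ causes all edge contributions to cancel except that of the edge $uv$. This yields the \emph{branch-sum identity}: for every edge $uv$, with $B_u$ the component of $T_n-uv$ containing $u$,
\[
\sum_{x\in B_u} f^*_x \;=\;\tfrac{\mu}{2}\bigl(f^*_v-f^*_u\bigr).
\]
Specialised to a pendant vertex $\ell$ adjacent to $u$ this becomes $f^*_\ell=\rho f^*_u$, where $\rho:=\mu/(2+\mu)$.

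For $T_n=K_{1,n-1}$ the sum-zero condition combined with the leaf identity becomes $(1+(n-1)\rho)f^*_c=0$, forcing $\rho=-1/(n-1)$ and hence $\mu=-2/n$, recovering $\qec(K_{1,n-1})=-2/n$. For a general tree, the branch-sum identities propagate recursively from the leaves inward, and the compatibility of the resulting linear system is encoded by a polynomial equation in $\mu$ whose largest root equals $\qec(T_n)$; in particular, the same analysis applied to $P_n$ recovers $\qec(P_n)=-1/(1+\cos(\pi/n))$.

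To prove the two bounds, I would carry out a monotonicity analysis under a \emph{leaf-relocation} operation: given a leaf $\ell$ attached at $u$ and a neighbour $v$ of $u$, form $T'$ from $T$ by moving $\ell$ from $u$ to $v$. Then $\Delta:=D_{T'}-D_T$ is a symmetric rank-$2$ matrix supported on the row and column of $\ell$, with $\Delta_{\ell,x}=\mathbf{1}_{V_u}(x)-\mathbf{1}_{V_v}(x)$ for $x\neq\ell$, where $V_u,V_v$ are the two components of $T-\{\ell,uv\}$. Evaluating on the optimizer gives $(f^*)^T\Delta f^*=2f^*_\ell(S_u-S_v)$, and the branch-sum identity for $uv$ expresses $S_u-S_v$ linearly in $f^*_v-f^*_u$. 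From this one identifies the ``good'' direction of leaf-relocation under which $\qec$ strictly increases; iterating such moves reduces any $T_n\neq K_{1,n-1}$ to $K_{1,n-1}$ in finitely many steps, yielding the upper bound and its equality case. The opposite direction, which ``stretches'' branches into chains, reduces any $T_n\neq P_n$ to $P_n$ in strictly QEC-decreasing steps and yields the lower bound.

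The main obstacle will be making the monotonicity rigorous: the sign of $(f^*)^T\Delta f^*$ must be shown to depend only on the structural direction of the relocation and not on the particular optimizer, and strict inequality must be extracted in the non-extremal cases. The cleanest route should exploit $\mu<0$ (every tree is of QE class by Schoenberg) together with the recursive sign information propagated by the branch-sum identity from the leaves inward, ensuring that $f^*_\ell\neq 0$ at the relocated leaf. An alternative---perhaps cleaner---attack is to reduce to a comparison with the two-parameter family of double stars $S(a,b)$, whose QECs can be computed explicitly from the identity above and then shown to sandwich $\qec(T_n)$ strictly between $\qec(P_n)$ and $\qec(K_{1,n-1})$ unless $T_n$ itself is $P_n$ or $K_{1,n-1}$.
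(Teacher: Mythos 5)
Your setup is sound: the stationarity equation $D_{T_n}f^*=\mu f^*+ce$, the branch--sum identity $\sum_{x\in B_u}f^*_x=\tfrac{\mu}{2}(f^*_v-f^*_u)$ for each edge $uv$ of a tree, the leaf relation $f^*_\ell=\tfrac{\mu}{2+\mu}f^*_u$, and the computation $(f^*)^T\Delta f^*=2f^*_\ell(S_u-S_v)$ for the leaf--relocation perturbation are all correct. But the proof has a genuine gap exactly where you flag "the main obstacle": the monotonicity of $\qec$ under leaf relocation is asserted, not proved, and it is the entire content of the theorem. Substituting the branch--sum identity for the edge $uv$ gives $S_u-S_v=\mu(f^*_v-f^*_u)-f^*_\ell$, so the sign of $(f^*)^T\Delta f^*=2f^*_\ell\bigl(\mu(f^*_v-f^*_u)-f^*_\ell\bigr)$ depends on the signs and relative magnitudes of $f^*_\ell,f^*_u,f^*_v$, none of which is structurally controlled (the optimizer is orthogonal to $e$, so its entries change sign in a pattern that varies with the tree). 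Moreover, $\qec(T')\geq\qec(T)+(f^*)^T\Delta f^*$ only requires the perturbation to be nonnegative at \emph{some} optimizer of $T$, while the reverse move requires nonnegativity at an optimizer of $T'$; you never establish either, and the strict inequalities needed for the two equality cases (at least one strictly increasing step toward $K_{1,n-1}$, resp.\ strictly decreasing toward $P_n$) are likewise unaddressed. The fallback via double stars is stated only as an intention. As written, the argument does not go through.

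For comparison, the paper avoids any relocation argument. The lower bound is immediate from two known facts: if $T_n\neq P_n$ then $K_{1,3}$ embeds isometrically in $T_n$, so $\qec(T_n)\geq\qec(K_{1,3})=-\tfrac12$, while $\qec(P_2)<\qec(P_3)<\cdots\to-\tfrac12$, whence $\qec(P_n)<\qec(T_n)$. The upper bound is an induction on $n$: delete a pendant vertex $v$ so that $T_{r+1}=K_2\star T_r$ with $T_r\neq K_{1,r-1}$, then apply the M{\l}otkowski--Obata star-product bound $\qec(K_2\star T_r)\leq -M(1,r-1;1,-\qec(T_r))$ together with monotonicity of $M$ and the explicit value $M(1,r-1;1,\tfrac{2}{r})=\tfrac{2}{r+1}$. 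If you want to salvage your approach, you would need to prove the relocation monotonicity as a standalone lemma (controlling the sign pattern of the optimizer along the tree), which appears substantially harder than the route above.
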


\begin{rem}
  In \cite{mo}, the authors showed that for any tree $T_n$ on $n\geq 3$ vertices
	\begin{equation}\label{tree}
		\qec(T_n)\leq -\frac{2}{2n-3},
	\end{equation}
	and this inequality is strict for $n=4,5$ (see \cite[Section 5]{zakiyyah}). A natural problem, which is mentioned in \cite{baskoro}, is to find tight upper and lower  bound for $\qec(T_n)$, where $T_n$ runs over all trees with $n$ vertices. %Thus the refinement of the above bound is an interesting problem which is also mentioned in \cite{baskoro}.
	 By \cite[Theorem 2.8]{zakiyyah}, $\qec(K_{1,n-1})=-\frac{2}{n}<-\frac{2}{2n-3}$. Hence our first main result answers this question.
\end{rem}

An interesting topic in spectral graph theory is exploring the relation between distance spectra and the quadratic embedding constants of graphs. The distance matrix of a connected graph is an irreducible, nonnegative, symmetric matrix. By the Perron--Frobenius Theorem, its largest eigenvalue is simple. By the min--max principle for eigenvalues of symmetric matrices \cite[Theorem 4.2.6]{horn},
\begin{equation}\label{minmax}
\lambda_2 (G)\leq \qec(G)<\lambda_1(G).
\end{equation}   This leads us to a natural question that has been pursued in the literature: \textit{Characterize the family of graphs that satisfy $\lambda_2(G)=QEC(G)$}. It is well known that this equality holds for distance regular graphs and transmission regular graphs \cite{baskoro}.  In [17], the author showed that $\lambda_2(G)=QEC(G)$ holds for path graphs with an even number of vertices. In this article, we show that this holds for unicyclic graphs with an even cycle, cacti with at least one even cycle, linear hexagonal chains and wheel graphs with odd vertices. In fact, we prove a stronger result:

\begin{thm}\label{sns}
	Let $G$ be a  graph on $n\geq 2$ vertices. Then $\lambda _2 (G)=0$ if and only if $\qec(G)=0$.
\end{thm}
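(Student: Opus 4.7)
The plan is to handle the two implications separately via the variational formula $\qec(G) = \max\{\langle f, D_G f\rangle : \|f\|=1, \langle f,e\rangle = 0\}$ and a Lagrange-multiplier analysis at the maximizer.

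For $(\Rightarrow)$, assume $\qec(G) = 0$. Inequality \eqref{minmax} immediately gives $\lambda_2(G) \leq 0$. I argue by contradiction: suppose $\lambda_2(G) < 0$, so that $D_G$ is invertible. By compactness the maximum is attained at some $f^*$ with $\|f^*\|=1$, $\langle f^*,e\rangle = 0$, $\langle f^*,D_Gf^*\rangle = 0$, and the Lagrange condition at this critical point reads $D_G f^* = \eta e$ for some $\eta \in \mathbb{R}$ (the Lagrange multiplier for the norm constraint equals the critical value $0$). Invertibility rules out $\eta = 0$, so $f^* = \eta D_G^{-1}e$, and the constraint $\langle f^*, e\rangle = 0$ forces $\langle D_G^{-1}e, e\rangle = 0$. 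Writing this in an orthonormal eigenbasis $\{v_i\}$ with $\alpha_i := \langle v_i, e\rangle$,
\[
\langle D_G^{-1}e, e\rangle \;=\; \frac{\alpha_1^2}{\lambda_1} \;-\; \sum_{i\geq 2}\frac{\alpha_i^2}{|\lambda_i|},
\]
so the contradiction will come from showing this quantity is strictly positive when $\lambda_2(G) < 0$.

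For $(\Leftarrow)$, assume $\lambda_2(G) = 0$. Again \eqref{minmax} yields $\qec(G) \geq 0$, and the task is to prove $\qec(G) \leq 0$, i.e., that $G$ is of QE class. An analogous Lagrange argument applied to a putative maximizer of positive value reduces the claim---after the null components $c_i$ with $\lambda_i = 0$ are handled (they contribute nothing to the quadratic form and should be placed only to satisfy the orthogonality constraint)---to the parallel inequality $\sum_{\lambda_i < 0}\alpha_i^2/|\lambda_i| \leq \alpha_1^2/\lambda_1$, equivalently the nonnegativity of $\langle D_G^{+}e, e\rangle$ under the Moore--Penrose pseudoinverse.

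The main obstacle in both directions is thus the single positivity statement for $\langle D_G^{+}e, e\rangle$ (strict for $\lambda_2 < 0$) whenever $D_G$ has at most one positive eigenvalue; this is where the combinatorial structure of graph distance matrices---nonnegative entries, triangle inequality, irreducibility, Perron--Frobenius positivity of $v_1$---should enter. I expect the cleanest packaging is through the broader equivalence that a connected graph $G$ is of QE class if and only if $D_G$ has at most one positive distance eigenvalue: the forward implication follows from Schoenberg's representation $D_G = c\,e^T + e\,c^T - 2M$ with $M \succeq 0$ combined with Weyl's inequality on rank-$2$ perturbations. Granted this equivalence, $\lambda_2(G) = 0$ is precisely ``$G$ is of QE class and $D_G$ is singular'', which together with \eqref{minmax} forces $\qec(G) = 0$.
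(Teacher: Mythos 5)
Your setup is sound and matches the paper's: both directions start from inequality \eqref{minmax} and the stationarity conditions of Proposition \ref{qecformula}, which in the forward direction (assuming $\qec(G)=0$ and, for contradiction, $\lambda_2(G)<0$) reduce the problem to showing $\langle e, D_G^{-1}e\rangle \neq 0$. But that reduction is where your argument stops being a proof. You write that ``the contradiction will come from showing this quantity is strictly positive when $\lambda_2(G)<0$,'' and later that the ``main obstacle in both directions'' is this positivity, which you ``expect'' to package via the equivalence that a connected graph is of QE class if and only if $D_G$ has at most one positive distance eigenvalue. Neither the positivity of $\langle e,D_G^{+}e\rangle$ nor that equivalence is proved, and the direction of the equivalence you actually need (at most one positive distance eigenvalue $\Rightarrow$ conditionally negative definite) is a statement strictly stronger than the theorem itself and is not established anywhere you can cite; the easy direction via Schoenberg's representation and Weyl's inequality, which you do sketch, is of no help here. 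Moreover, even granting the equivalence, it only yields $\lambda_2(G)=0\Rightarrow\qec(G)=0$; for the converse you would still need to show that $\qec(G)=0$ forces $D_G$ to be singular, which is exactly the unproved non-vanishing statement resurfacing. So the crux of both implications is missing.

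For comparison, the paper closes this gap without any positivity claim. From $\langle e,D_G^{-1}e\rangle=0$ it deduces $\det(D_G+J)=\det(D_G)$ by the matrix determinant lemma, and then contradicts this by combining Weyl's inequality ($\lambda_i(G)\le\beta_i$, where $\beta_1\ge\cdots\ge\beta_n$ are the eigenvalues of $D_G+J$) with the Perron--Frobenius theorem: every eigenvector $y$ for $\lambda_1(G)$ has nonzero entry sum, so $Jy\neq 0$ and $\lambda_1(G)<\beta_1$ strictly, whence the two determinants differ. The backward direction is the same determinant comparison applied to $D_G-\qec(G)I+J$ versus $D_G-\qec(G)I$. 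To salvage your route you must either actually prove that $\langle e,D_G^{+}e\rangle>0$ for connected graphs with at most one positive distance eigenvalue --- you give no argument, and the ``combinatorial structure'' you gesture at does not obviously supply one --- or replace the positivity claim with a non-vanishing argument of the determinant type above.
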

\newpage
\begin{rem}
	 By the above theorem, any graph $G$ with the property that either the second largest distance eigenvalue is zero or $\qec(G)=0$, satisfies $\lambda _2(G)=\qec(G)=0$. Unicyclic graphs with an even cycle, cacti with at least one even cycle, linear hexagonal chains, and a plethora of other graphs in \cite{aouchiche} have the second largest eigenvalue zero. Hence, for all those graph classes, $\qec(G)$ is the second largest eigenvalue of $D_G$.
\end{rem}

In 1977, Graham--Pollak \cite{Graham-Pollak} proved that $D_T$ for any tree $T$ is nonsingular and $\det(D_T)$ depends only on the number of vertices of $T$. Thus, a path graph with even number of vertices provides an example of a  graph with a nonsingular distance matrix and the quadratic embedding constant as a distance eigenvalue.  Our next main result gives a new subclass of  trees with the quadratic embedding constant as the second largest distance eigenvalue.

\begin{thm}\label{dstareig}
	Let $S_{n,n}$ be a double star graph (see Definition \ref{dstar}). Then \[\qec(S_{n,n})=\lambda _2(S_{n,n})=\frac{-(n+2)+\sqrt{(n+2)^2-8}}{2}.\] Moreover, in the definition of $\qec(S_{n,n})$, the maximum is attained at a vector $f\in \mathbb{R}^{2n}$ of the form $\frac{g}{\parallel g \parallel }$, where
	\begin{equation}\label{dstareq}
		g=\begin{pmatrix}
			\ \  y \\
			-y \\
		\end{pmatrix},~ y\in \mathbb{R}^n \hbox{~~~and~~~} y_i=\left\{\begin{array}{cl} \frac{n+\sqrt{(n+2)^2-8}}{2},&\mbox{if }i=1,\\-1,&\mbox{otherwise.}\end{array}\right.
	\end{equation}
\end{thm}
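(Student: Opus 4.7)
The plan is to exploit the obvious $\mathbb{Z}/2$ symmetry of $S_{n,n}$ that swaps its two halves. Label the $2n$ vertices so that each half consists of one central vertex followed by its $n-1$ pendants, and write
\[
D_{S_{n,n}} = \begin{pmatrix} A & B \\ B & A \end{pmatrix},
\]
where $A$ is the $n\times n$ intra-half distance block and $B$ the inter-half distance block. The swap involution $P = \begin{pmatrix} 0 & I \\ I & 0 \end{pmatrix}$ commutes with $D_{S_{n,n}}$, so $\mathbb{R}^{2n}$ decomposes orthogonally into the symmetric subspace $V^+ = \{(y,y)\}$ and the anti-symmetric subspace $V^- = \{(y,-y)\}$; $D$ acts as $A+B$ on $V^+$ and as $A-B$ on $V^-$. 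The decisive observation is that every anti-symmetric vector automatically satisfies $\langle e,f\rangle = \mathbf{1}_n^T y - \mathbf{1}_n^T y = 0$, so the QEC is bounded below by the largest eigenvalue of $A-B$ with no constraint at all; on the symmetric side, the constraint becomes $\mathbf{1}_n^T y = 0$.

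Computing entries, I would write the inter-half block and find
\[
A - B \;=\; -J_n - 2I_n + 2E_{11},
\]
where $J_n$ is the all-ones matrix and $E_{11}$ the matrix with a single $1$ in position $(1,1)$. This matrix preserves the splitting $\mathbb{R}^n = W_2 \oplus W_{n-2}$, with $W_2 = \operatorname{span}\{e_1,\,(0,1,\ldots,1)^T\}$ and $W_{n-2} = \{y : y_1=0,\ \sum_{i\ge 2}y_i=0\}$; a direct check shows $A-B$ acts as $-2I$ on $W_{n-2}$ and, on $W_2$ in the above basis, as the $2\times 2$ matrix $\begin{pmatrix}-1 & -(n-1)\\-1 & -(n+1)\end{pmatrix}$. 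Its characteristic polynomial is $\lambda^2+(n+2)\lambda+2=0$, whose roots are $\tfrac12\bigl(-(n+2)\pm\sqrt{(n+2)^2-8}\bigr)$. The larger root is $\lambda^+ := \tfrac12\bigl(-(n+2)+\sqrt{(n+2)^2-8}\bigr)$, and solving $(A-B)y=\lambda^+ y$ inside $W_2$ with ansatz $(a,b,\ldots,b)$ gives $a = -(\lambda^+ + n+1)b = -\tfrac{n+\sqrt{(n+2)^2-8}}{2}\,b$; taking $b=-1$ produces exactly the vector $g$ displayed in \eqref{dstareq}.

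For the symmetric half I would run the parallel analysis: $A+B$ also preserves $W_2 \oplus W_{n-2}$, with eigenvalue $-2$ on $W_{n-2}$ and some $2\times 2$ block on $W_2$. The constraint $\mathbf{1}_n^T y = 0$ is automatic on $W_{n-2}$ but cuts $W_2$ to a single line (on which the Rayleigh quotient works out to $-2/n$). Thus the supremum of the quadratic form over the symmetric constrained sphere is $\max(-2,\,-2/n)=-2/n$ for $n\ge 2$. Comparing with $\lambda^+$ reduces to $\sqrt{(n+2)^2-8} > n+2 - 4/n$, which squares to $16n > 16$, true for $n\ge 2$. Consequently $\qec(S_{n,n}) = \lambda^+$ and the maximum is attained on the anti-symmetric side at the vector $g/\|g\|$.

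Finally, for $\lambda_2(S_{n,n})=\lambda^+$ I would list the full spectrum of $D_{S_{n,n}}$: the two eigenvalues of $A+B$ coming from its $W_2$ block (one of which is the Perron root), the two eigenvalues of $A-B$ from its $W_2$ block, and the eigenvalue $-2$ with total multiplicity $2(n-2)$. Showing that $\lambda^+$ is the \emph{second} largest among these reduces to verifying $\lambda^+ > \mu_2$, where $\mu_2$ is the smaller eigenvalue of $A+B|_{W_2}$; this is equivalent to $\sqrt{(n+2)^2-8}+\sqrt{25n^2-44n+28} > 6n-4$, a routine polynomial inequality after squaring. The technical hurdle I anticipate is precisely this eigenvalue comparison, since it requires honest algebraic manipulation rather than symmetry; everything else is delivered cleanly by the $V^+ \oplus V^-$ decomposition.
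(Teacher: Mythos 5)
Your proposal is correct, and it takes a genuinely different route from the paper. The paper gets the value of $\qec(S_{n,n})$ essentially for free by writing $S_{n,n}=K_2\{K_{1,n-1}\}$ and invoking the cluster formula of Theorem \ref{clustercomplete} with $\qec(K_2)=-1$; it then verifies by a direct matrix--vector computation that the vector $g$ of \eqref{dstareq} satisfies $D_{S_{n,n}}g=\qec(S_{n,n})\,g$, and concludes $\lambda_2=\qec$ immediately from the sandwich $\lambda_2(G)\leq \qec(G)<\lambda_1(G)$ of \eqref{minmax}: an eigenvalue of $D_{S_{n,n}}$ that is strictly less than $\lambda_1$ is at most $\lambda_2$. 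Your argument is instead self-contained: the involution swapping the two halves splits $\mathbb{R}^{2n}$ into $V^+\oplus V^-$, the constraint $\langle e,f\rangle=0$ lives entirely on $V^+$, and your computations ($A-B=-J-2I+2E_{11}$, the $2\times 2$ block with characteristic polynomial $\lambda^2+(n+2)\lambda+2$, the eigenvector $a=-(\lambda^++n+1)b$, the symmetric-side bound $-2/n$ via $\langle y,(A+B)y\rangle=-2+2y_1^2$ on the constrained sphere, and the comparison reducing to $16n>16$) all check out; note that $W_{n-2}$ is an actual eigenspace of both $A\pm B$ for the eigenvalue $-2$, so the compression you implicitly use is genuinely block-diagonal. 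What your route buys is independence from Theorem \ref{clustercomplete} plus the full distance spectrum of $S_{n,n}$ as a byproduct; what it costs is the final eigenvalue comparison $\lambda^+>\mu_2$, which you correctly identify as the technical hurdle but which is entirely avoidable: once you know $\lambda^+$ is both an eigenvalue of $D_{S_{n,n}}$ and equal to $\qec(S_{n,n})$, inequality \eqref{minmax} forces $\lambda_2=\lambda^+$ with no further algebra. I would recommend replacing your spectrum-listing step by that one-line observation.
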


We, now turn our focus to investigating the quadratic embedding constant of $G$ using graph operations. It can be challenging to determine the quadratic embedding constant of a graph sometimes, but if we factorize the given $G$ into smaller graphs whose quadratic embedding constants are known, $\qec(G)$ can then be represented in terms of the quadratic embedding constant of the smaller graphs. Thus, a natural question arises: Can the quadratic embedding constant of $G=G_1\#G_2$ be computed using $\qec(G_1)$ and $\qec(G_2)$, where $\#$ denotes some graph operation? The quadratic embedding constant of $G$ using several graph operations, e.g., Cartesian product \cite{zakiyyah}, star product \cite{baskoro,mo}, lexicographic product and joining \cite{lou} of graphs studied in the literature. In this article, we study the cluster $G_1\{G_2\}$ of two simple, connected  graphs $G_1$ and $G_2$ (See Definition \ref{defncluster}). Our final result provides $\qec(G\{K_n\})$ and $\qec(G\{K_{1,{n-1}}\})$, in terms of the quadratic embedding constant of the base graph $G$.

%\begin{thm}\label{clustercomplete}
%	Let $G$ be a simple, connected graph and $K_n$ be a complete graph on $n\geq 2$ vertices. Then
%	\begin{equation*}
%		\qec(G\{K_n\})=\frac{(n\qec(G)-n)+\sqrt{(n\qec(G)-n)^2+4n\qec(G)}}{2}.
%	\end{equation*}
%\end{thm}
\begin{thm}\label{clustercomplete}
	Let $n\geq 2$ be an integer and $G$ be an arbitrary graph. Then
	\begin{itemize}
		\item[(i)] For a complete graph $K_n$ on $n\geq 2$ vertices
		\[	\qec(G\{K_n\})=\frac{(n\qec(G)-n)+\sqrt{(n\qec(G)-n)^2+4n\qec(G)}}{2}.\]
		
		\item[(ii)] For a star graph $K_{1,n-1}$ on $n\geq 2$ vertices
		\[		\qec(G\{K_{1,n-1}\})=\frac{(n\qec(G)-2)+\sqrt{(n\qec(G)-2)^2+8\qec(G)}}{2}.\]
	\end{itemize}
%	(i) If $K_n$ is a complete graph on $n\geq 2$ vertices, then
%	\begin{equation*}
%		\qec(G\{K_n\})=\frac{(n\qec(G)-n)+\sqrt{(n\qec(G)-n)^2+4n\qec(G)}}{2}.
%	\end{equation*}\\
%	(ii) If $K_{1,n-1}$ is a star tree on $n\geq 2$ vertices, then
%	\begin{equation*}
%		\qec(G\{K_{1,n-1}\})=\frac{(n\qec(G)-2)+\sqrt{(n\qec(G)-2)^2+8\qec(G)}}{2}.
%	\end{equation*}
	\end{thm}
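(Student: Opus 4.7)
The plan is to exploit the block structure of the distance matrix of the cluster and reduce the extremal problem defining $\qec(G\{H\})$, for $H \in \{K_n,\,K_{1,n-1}\}$, to a single $2\times 2$ generalized eigenvalue problem whose data are $\qec(G)$ and the structure of $H$.

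First, I would spell out the distances in $G\{H\}$. Writing $m = |V(G)|$ and letting $v_{i,k}$ denote the $k$-th non-root vertex in the copy of $H$ rooted at $i \in V(G)$, the key point for both choices of $H$ is that $d_H(r,\cdot)$ is identically $1$, so that $d(i,v_{j,k}) = d_G(i,j)+1$ and $d(v_{i,k},v_{j,k'}) = d_G(i,j)+2$ whenever $i \neq j$; these inter-copy distances are independent of $k$ and $k'$. Define
\[V_{\mathrm{asym}} = \Bigl\{f \in C(V(G\{H\})) : f|_{V(G)} \equiv 0,\ \sum_k f(v_{i,k}) = 0 \text{ for each } i\Bigr\}\]
and let $V_{\mathrm{sym}}$ be its orthogonal complement, consisting of functions constant in $k$ along every fibre. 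The $k$-independence above forces $D_{G\{H\}}$ to preserve $V_{\mathrm{sym}} \oplus V_{\mathrm{asym}}$, and the metric inside one copy of $H$ identifies $V_{\mathrm{asym}}$ with an eigenspace of $D_H$ on the sum-zero part: this gives $D_{G\{H\}}|_{V_{\mathrm{asym}}} = -\mathrm{id}$ for $H = K_n$ and $-2\,\mathrm{id}$ for $H = K_{1,n-1}$.

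On $V_{\mathrm{sym}}$ I would parametrize $f$ by $(x,t) \in \mathbb{R}^m \oplus \mathbb{R}^m$ via $x = f|_{V(G)}$ and $t_i = f(v_{i,k})$, in which coordinates $\|f\|^2 = \|x\|^2 + (n-1)\|t\|^2$. Expanding $\langle f,D_{G\{H\}}f\rangle$ and substituting $z := x + (n-1)t$ collapses everything to
\[\langle f, D_{G\{H\}} f\rangle = \langle z, D_G z\rangle - c\|t\|^2, \qquad c = \begin{cases} n(n-1) & \text{if } H = K_n,\\ 2(n-1) & \text{if } H = K_{1,n-1},\end{cases}\]
with the centering constraint $\langle e,f\rangle=0$ becoming $z \perp \mathbf{1}$ in $\mathbb{R}^m$. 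Now pick $w \in \mathbf{1}^\perp$ of unit norm with $w^T D_G w = \qec(G) =: \alpha$, and consider the $2$-dimensional subspace $\mathcal{W}$ spanned by $f_1 \leftrightarrow (w,0)$ and $f_2 \leftrightarrow (0,w)$. Since $w \perp \mathbf{1}$, $\mathcal{W}$ sits inside $e^\perp$, and a direct computation of $\langle f_a, D_{G\{H\}} f_b\rangle$ for $a,b \in \{1,2\}$ yields the matrix of the quadratic form restricted to $\mathcal{W}$; its largest generalized eigenvalue relative to the Gram matrix $\mathrm{diag}(1,n-1)$ is the larger root of
\[\lambda^2 - (n\alpha-n)\lambda - n\alpha = 0 \text{ for } K_n,\qquad \lambda^2 - (n\alpha-2)\lambda - 2\alpha = 0 \text{ for } K_{1,n-1},\]
which I will call $\lambda_0$; these larger roots match the formulas in the statement.

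Finally, I would upgrade this lower bound to $\qec(G\{H\}) = \lambda_0$. For $f \in e^\perp$, split orthogonally $f = f_{\mathrm{sym}} + f_{\mathrm{asym}}$; the asymmetric contribution is $\nu \|f_{\mathrm{asym}}\|^2$ with $\nu \in \{-1,-2\}$, and a routine check from the explicit formula gives $\lambda_0 > \nu$ in both cases. For the symmetric part, inserting the definitional bound $\langle z, D_G z\rangle \le \alpha \|z\|^2$ (valid because $z \perp \mathbf{1}$) into the identity above reduces the required inequality $\langle f_{\mathrm{sym}}, D_{G\{H\}} f_{\mathrm{sym}}\rangle \le \lambda_0 \|f_{\mathrm{sym}}\|^2$ to a two-variable quadratic inequality in $(\|z\|,\|t\|,\langle z,t\rangle)$; direct expansion shows this form is negative semidefinite precisely because of the defining equation $\lambda_0^2 - (n\alpha-n)\lambda_0 - n\alpha = 0$ (or its star analogue). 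I expect this last verification to be the main obstacle: the input bound $\langle z, D_G z\rangle \le \alpha\|z\|^2$ saturates only along the $w$-direction while the residual planar inequality is tight only when $z \parallel t$, and the crucial algebraic miracle is that the defining equation for $\lambda_0$ forces these two a priori independent saturation conditions to be simultaneously achievable -- which is exactly what the $2\times 2$ eigenvalue computation delivers.
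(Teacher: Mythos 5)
Your proposal is correct, and it takes a genuinely different route from the paper. The paper works through the Lagrange-multiplier characterization of $\qec$ (Proposition \ref{qecformula}): it writes the stationary-point system for $D_{G\{H\}}$ in block form, deduces $x^2=\cdots=x^n=\tfrac{\lambda}{\lambda+n}x^1$ and $Jx^1=0$, thereby exhibits a bijection between the stationary points of $D_{G\{H\}}$ and those of $D_G$ with $\lambda$ a fixed quadratic function of $a$, and finally invokes the monotonicity of $a\mapsto\lambda(a)$ (Lemma \ref{realrelation}) to pass from the maximal stationary value $a=\qec(G)$ to the maximal $\lambda$. You instead argue variationally: you split $C(V(G\{H\}))$ into the fibre-constant part and the fibre-sum-zero part vanishing on $V(G)$, observe that $D_{G\{H\}}$ acts as $-\mathrm{id}$ (resp.\ $-2\,\mathrm{id}$) on the latter, collapse the symmetric part to $\langle z,D_Gz\rangle-c\|t\|^2$ with $z=x+(n-1)t\perp e$, get the lower bound from a $2\times2$ generalized eigenvalue problem on the span of $(w,0)$ and $(0,w)$, and get the upper bound by inserting $\langle z,D_Gz\rangle\le\qec(G)\|z\|^2$ and checking a residual binary quadratic form is negative semidefinite. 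I verified the key computations: the characteristic polynomial of your $2\times2$ pencil is $(n-1)\lambda^2+\lambda\,[c-n(n-1)\alpha]-c\alpha$, which reproduces both formulas; the determinant of the residual form vanishes identically by this same equation; the diagonal-sign conditions $\alpha\le\lambda_0$ and $c+n(n-1)\lambda_0\ge0$ follow from $p(\alpha)=(1-n)\alpha^2\le0$ and $p(-1)\le 0$ (resp.\ $p(-2/n)\le0$); and $\lambda_0\ge\nu$ handles the asymmetric part. Your approach is self-contained (it does not need Proposition \ref{qecformula} or the monotonicity lemma, whose role is played by your semidefiniteness check) and it explains structurally why only the single scalar $\qec(G)$ enters; the paper's approach is shorter on the page because it outsources the variational work to the cited proposition and treats both cluster constructions almost verbatim. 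The one point you should make explicit when writing it up is that $e$ lies in the fibre-constant subspace, so the constraint $f\perp e$ passes to $f_{\mathrm{sym}}\perp e$ and hence to $z\perp\mathbf{1}$, which is what licenses the bound $\langle z,D_Gz\rangle\le\qec(G)\|z\|^2$.
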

As  applications of this result: (a) We generate new families of  examples of graphs of QE class. (b) We obtain the quadratic embedding constant of a simple corona graph.

\noindent\textbf{Organization of the paper:} The remaining sections are devoted to proving our main results above. In section \ref{sectree},  we recall the quadratic  embedding constants of a complete bipartite graph and of the star product of graphs, and prove Theorem \ref{ulbound}. In section \ref{specqec}, we prove Theorems \ref{sns} and \ref{dstareig}. We also provide a formula for the quadratic embedding constant of a general double star tree $S_{m,n}$ -- see Theorem \ref{gdoublestar}. In the final section, we prove Theorem \ref{clustercomplete}, followed by several applications.

\section{Sharp bound for the Quadratic Embedding Constant of Trees}\label{sectree}
We begin by proving Theorem \ref{ulbound}: for any tree $T_n$ on $n$ vertices, $\qec(T_n)$ lies between $\qec(P_n)$ and $\qec(K_{1,{n-1}})$. The proof requires a couple of preliminary results; the first gives the quadratic embedding constant of complete bipartite graphs.

\begin{thm}\label{qeccbip}\cite[Theorem 2.8]{zakiyyah}
	Let $K_{m,n}$ be a complete bipartite graph on $(m+n)$ vertices. Then
	\begin{center}
		$\qec(K_{m,n})=\frac{2(mn-m-n)}{m+n}$\ \ \  for all \ \ \  $m\geq 1$, $n\geq 1$.
	\end{center}
	In particular, for a star graph $K_{1,n-1}$ on $n$ vertices, $\qec(K_{1,n-1})=-\frac{2}{n}$.
\end{thm}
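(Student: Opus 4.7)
The plan is to compute $\qec(K_{m,n})$ by direct constrained optimization, exploiting the block structure of $D_G$ for the complete bipartite graph. Label the two color classes $A$ and $B$ of sizes $m$ and $n$. Vertices in the same class are at distance $2$ (via any vertex of the other class), while vertices in different classes are adjacent, so
\[
D_G \;=\; \begin{pmatrix} 2(J_m - I_m) & J_{m\times n} \\ J_{n\times m} & 2(J_n - I_n) \end{pmatrix}.
\]
This block form should reduce the quadratic form to a few scalars, namely the coordinate sums and norms of the two blocks of the test vector.

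Next, I would write any test vector as $f = \begin{pmatrix} x \\ y \end{pmatrix}$ with $x \in \mathbb{R}^m$, $y \in \mathbb{R}^n$, and set $a := \langle e_m, x\rangle$, $b := \langle e_n, y\rangle$. Expanding $\langle f, D_G f\rangle$ block by block gives
\[
\langle f, D_G f\rangle \;=\; 2a^2 - 2\|x\|^2 \;+\; 2ab \;+\; 2b^2 - 2\|y\|^2.
\]
The two defining constraints are $\|x\|^2 + \|y\|^2 = 1$ and $a + b = 0$. Substituting $b = -a$ collapses this to $\langle f, D_G f\rangle = 2a^2 - 2$, so the problem reduces to maximizing $a^2$ alone.

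To maximize $a^2$, I would invoke Cauchy--Schwarz twice: $a^2 = \langle e_m, x\rangle^2 \le m\|x\|^2$ and $a^2 = \langle e_n, y\rangle^2 \le n\|y\|^2$, whence $a^2\bigl(\tfrac{1}{m}+\tfrac{1}{n}\bigr) \le \|x\|^2 + \|y\|^2 = 1$, giving $a^2 \le \tfrac{mn}{m+n}$. Substituting back yields $\qec(K_{m,n}) \le \tfrac{2(mn-m-n)}{m+n}$. Equality is achieved by taking $x$ and $y$ constant on their respective blocks with $x_i = a/m$ and $y_j = -a/n$ where $a = \sqrt{mn/(m+n)}$; this choice is orthogonal to $e$, has unit norm, and saturates both Cauchy--Schwarz bounds, proving the extremum. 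The particular case $m=1,\ n \mapsto n-1$ then reads off $\qec(K_{1,n-1}) = -2/n$.

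There is no real obstacle here: once the block decomposition is written out, the cancellation reducing the quadratic form to $2a^2 - 2$ is the single observation that makes the proof almost mechanical, and Cauchy--Schwarz on each block handles the rest. The only mild subtlety is ensuring the Cauchy--Schwarz equality cases are simultaneously compatible with $a + b = 0$ and $\|x\|^2 + \|y\|^2 = 1$, which they are because both bounds are saturated by constant blocks and the scaling leaves enough freedom to impose unit norm.
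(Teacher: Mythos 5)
Your proof is correct: the block form of $D_{K_{m,n}}$, the reduction of the quadratic form to $2a^2-2$ under the constraints $\|x\|^2+\|y\|^2=1$ and $b=-a$, the Cauchy--Schwarz bound $a^2\le \tfrac{mn}{m+n}$, and the constant-block extremal vector all check out, and the specialization $m=1$, $n\mapsto n-1$ gives $-2/n$ as claimed. Note that the paper offers no proof of this statement --- it is quoted verbatim from \cite[Theorem~2.8]{zakiyyah} --- so there is nothing in-paper to compare against; your direct constrained-optimization argument is a clean, self-contained derivation of the cited result.
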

The next result concerns the quadratic embedding constant of  the star product of two graphs. Recall that the star product of $G_1=(V_1,E_1)$ and $G_2=(V_2,E_2)$ with respect to distinguished vertices $\sigma _1\in V_1$ and $\sigma _2\in V_2$ is also called their coalescence, is denoted by $G_1\star G_2$ and is obtained by joining $G_1$ and $G_2$ at the  vertices $\sigma _1$
and $\sigma _2$.

\begin{prop}\label{star}\cite[Proposition 4.3]{mo}
Let $G_1$ and $G_2$ be two graphs on $n_1+1$ and $n_2+1$ vertices with $Q_1=\qec(G_1)$ and $Q_2=\qec(G_2)$ respectively. %with $Q_i=\qec(G_i)$ for $i=1,2$. %Let $G_i=(V_i,E_i)$ be a connected graph on $n_i+1=|V_i|\geq 2$ with $Q_i=\qec(G_i)$ for $i=1,2$.
 Let $M=M(n_1,n_2;-Q_1,-Q_2)$ be the conditional infimum of
	\[		\Phi (\alpha ,x^1,x^2)=\sum _{i=1}^2 (-Q_i)\{\langle x^i,x^i \rangle +\langle e,x^i \rangle ^2\}, \ \ \ \ \ \  \alpha \in \mathbb{R}, \ \  x^i\in \mathbb{R}^{n_i},
	\]
	subject to
	\[	
		\alpha ^2+\sum _{i=1}^2\langle x^i,x^i \rangle =1\ \ \   and\ \ \  \alpha +\sum _{i=1}^2\langle e,x^i \rangle =0.
	\]
	Then, $\qec(G_1\star G_2)\leq -M$.
\end{prop}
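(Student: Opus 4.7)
The plan is to show that, after a clever choice of auxiliary vectors on each factor, the quadratic form of $D_{G_1\star G_2}$ splits exactly as a sum of the quadratic forms of $D_{G_1}$ and $D_{G_2}$, each evaluated at a vector orthogonal to the all-ones vector of its ambient space. The bound on $\qec(G_1\star G_2)$ then follows by applying the definition of $Q_1$ and $Q_2$ term by term.

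Concretely, identify $\sigma:=\sigma_1=\sigma_2$ in $G_1\star G_2$ and write any $f\in C(V(G_1\star G_2))$ as $f=(\alpha,x^1,x^2)$ with $\alpha=f(\sigma)$ and $x^i\in\mathbb{R}^{n_i}$ the restriction of $f$ to $V_i\setminus\{\sigma_i\}$. Under the natural block decomposition of $D_{G_1\star G_2}$, the only non-trivial feature is that for $u\in V_1\setminus\{\sigma_1\}$ and $v\in V_2\setminus\{\sigma_2\}$ the distance equals $d_{G_1}(u,\sigma_1)+d_{G_2}(\sigma_2,v)$, so the off-diagonal block takes the form $d_1 e^T + e d_2^T$, where $d_i$ is the vector of distances from $\sigma_i$ in $G_i$. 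A short expansion then gives
\[
\langle f, D_{G_1\star G_2} f\rangle = \sum_{i=1}^2 \bigl[2\alpha\langle d_i,x^i\rangle + (x^i)^T A_i x^i\bigr] + 2\langle d_1,x^1\rangle\langle e,x^2\rangle + 2\langle e,x^1\rangle\langle d_2,x^2\rangle,
\]
where $A_i$ is the principal submatrix of $D_{G_i}$ on $V_i\setminus\{\sigma_i\}$.

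The decisive step is the choice of auxiliary vectors $f^{(i)}\in\mathbb{R}^{n_i+1}$: place $\gamma_i:=-\langle e,x^i\rangle$ at the $\sigma_i$-coordinate and $x^i$ on the remaining coordinates. By the linear constraint $\alpha+\langle e,x^1\rangle+\langle e,x^2\rangle=0$, one has $\gamma_1=\alpha+\langle e,x^2\rangle$ and $\gamma_2=\alpha+\langle e,x^1\rangle$, so expanding $\langle f^{(i)},D_{G_i}f^{(i)}\rangle = 2\gamma_i\langle d_i,x^i\rangle+(x^i)^T A_i x^i$ and summing over $i$ reproduces \emph{exactly} the right-hand side above — the cross terms $2\langle e,x^2\rangle\langle d_1,x^1\rangle$ and $2\langle e,x^1\rangle\langle d_2,x^2\rangle$ appear naturally from the $2\gamma_i\langle d_i,x^i\rangle$ pieces. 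This yields the key identity
\[
\langle f, D_{G_1\star G_2}f\rangle = \langle f^{(1)},D_{G_1}f^{(1)}\rangle + \langle f^{(2)},D_{G_2}f^{(2)}\rangle.
\]

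By construction $\langle e,f^{(i)}\rangle=\gamma_i+\langle e,x^i\rangle=0$, and $\langle f^{(i)},f^{(i)}\rangle = \langle e,x^i\rangle^2+\langle x^i,x^i\rangle$. The Rayleigh-quotient characterization of $\qec(G_i)$ then gives $\langle f^{(i)},D_{G_i}f^{(i)}\rangle \le Q_i\bigl(\langle e,x^i\rangle^2+\langle x^i,x^i\rangle\bigr)$. Summing and recognizing the right-hand side as $-\Phi(\alpha,x^1,x^2)$ yields $\langle f,D_{G_1\star G_2}f\rangle \le -\Phi(\alpha,x^1,x^2)$ whenever $f$ satisfies $\langle f,f\rangle=1$ and $\langle e,f\rangle=0$. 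Taking the supremum on the left over feasible $f$ and the infimum on the right over the same (identical) feasible set gives $\qec(G_1\star G_2)\le -M$, as required. The only step carrying real content is the exact cancellation producing the identity in the middle paragraph; the choice $\gamma_i=-\langle e,x^i\rangle$ is forced by the demand that $f^{(i)}\perp e$, and it is a small but satisfying miracle that this same choice causes the mixed terms to match.
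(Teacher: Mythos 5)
Your argument is correct. Note that the paper does not prove this proposition at all --- it is imported verbatim from M{\l}otkowski--Obata \cite{mo} --- so there is no in-paper proof to compare against; your decomposition $\langle f, D_{G_1\star G_2}f\rangle = \langle f^{(1)},D_{G_1}f^{(1)}\rangle + \langle f^{(2)},D_{G_2}f^{(2)}\rangle$ with $\gamma_i = -\langle e,x^i\rangle$ at the coalesced vertex is exactly the standard argument from that reference, and the verification (the off-diagonal block $d_1e^T+ed_2^T$, the cancellation of cross terms via the constraint $\alpha+\langle e,x^1\rangle+\langle e,x^2\rangle=0$, and the term-by-term application of $\langle g,D_{G_i}g\rangle\le Q_i\langle g,g\rangle$ for $g\perp e$) is sound.
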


The third preliminary result establishes a relationship between the quadratic embedding constants of a graph and its subgraphs.
\begin{thm}\cite[Theorem 3.1]{zakiyyah}\label{iembedded}
	Let $G$ be a connected graph and $H$ be a connected subgraph of $G$. If $H$ is isometrically embedded in $G$, then $\qec(H)\leq \qec(G)$.
\end{thm}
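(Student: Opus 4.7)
The plan is a straightforward zero-padding argument: extend any admissible vector $f$ for $\qec(H)$ to an admissible vector $\tilde{f}$ for $\qec(G)$ by placing $f$ on the vertices of $H$ and zeros on the complementary vertices, then observe that the distance quadratic form takes the same value on $\tilde{f}$ as on $f$. The isometric embedding hypothesis enters at exactly one spot, ensuring that $d_G(u,v)=d_H(u,v)$ for all $u,v\in V(H)$, so that $D_H$ is precisely the principal submatrix of $D_G$ indexed by $V(H)$.

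Concretely, I would fix an arbitrary $f\in C(V(H))$ with $\langle e_H,f\rangle=0$ and $\langle f,f\rangle=1$, and define $\tilde{f}\in C(V(G))$ by $\tilde{f}(v)=f(v)$ if $v\in V(H)$ and $\tilde{f}(v)=0$ otherwise. The zero extension immediately gives $\langle e_G,\tilde{f}\rangle=\langle e_H,f\rangle=0$ and $\langle\tilde{f},\tilde{f}\rangle=\langle f,f\rangle=1$, so $\tilde{f}$ is a valid competitor in the definition of $\qec(G)$. Expanding the form,
\[
\langle\tilde{f},D_G\tilde{f}\rangle \;=\; \sum_{u,v\in V(G)}\tilde{f}(u)\tilde{f}(v)(D_G)_{uv} \;=\; \sum_{u,v\in V(H)} f(u)f(v)\,d_G(u,v),
\]
where the second equality uses that $\tilde{f}$ vanishes off $V(H)$. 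By the isometric embedding hypothesis, $d_G(u,v)=d_H(u,v)$ for all $u,v\in V(H)$, and hence the right-hand side equals $\langle f,D_H f\rangle$. Therefore every value of the form $\langle f,D_H f\rangle$ attained by an admissible $f$ is also attained by the admissible $\tilde{f}$ in $\qec(G)$; taking suprema gives $\qec(H)\le\qec(G)$.

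I do not expect any real obstacle, since this is essentially a one-line interlacing-style observation dressed in the QEC formalism. The only ingredient that cannot be dropped is the isometric embedding assumption; without it one would only have $d_G(u,v)\le d_H(u,v)$, and although the zero-extended vector $\tilde{f}$ is still admissible, the identity $\langle\tilde{f},D_G\tilde{f}\rangle=\langle f,D_H f\rangle$ would fail and the inequality could go the wrong way. Everything else is routine bookkeeping of the two linear constraints.
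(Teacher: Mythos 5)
Your zero-padding argument is correct: the extension preserves both constraints, the quadratic form restricts to the principal submatrix indexed by $V(H)$, and the isometric embedding hypothesis identifies that submatrix with $D_H$, giving $\qec(H)\le\qec(G)$ by taking the maximum. The paper itself states this result as a citation to \cite[Theorem 3.1]{zakiyyah} without reproducing a proof, and your argument is the standard one for that theorem, including the correct observation that for a mere subgraph one only has $d_G(u,v)\le d_H(u,v)$, which is why isometry cannot be dropped.
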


% \begin{prop}\cite{mo}
% Let $G_1$ and $G_2$ be two connected graphs with $QEC(G_1) < 0$ and $QEC(G_2) < 0$. Then,
% \begin{center}
% $QEC(G_1\star G_2)\leq \Big (\frac{1}{QEC(G_1)}+ \frac{1}{QEC(G_2)}\Big )^{-1}$.
% \end{center}
% \end{prop}
%\begin{rem}
%In \cite{zakiyyah}, authors proved that any tree $T$ is of QE class.  In \cite{mo}, the authors showed that for any tree $T_n$ on $n\geq$ vertices
%\begin{equation}\label{tree}
%	\qec(T)\leq -\frac{2}{2n-3},
%\end{equation}
% and the bound is strict for $n=4,5$. So, the refinement of the above bound is an interesting problem which is also mentioned in \cite{baskoro}. By \cite{zakiyyah}, $\qec(K_{1,n-1})=-\frac{2}{n}$. Hence our first main result provides an improved upper bound and a tight lower bound for $\qec(T_n)$.
%\end{rem}
With these preliminaries at hand, our first main result follows.
\begin{proof}[Proof of Theorem~\ref{ulbound}]
 First, we prove the left inequality -- $\qec(P_n) \leq \qec(T_n)$. Suppose $T_n$ is a tree other than the path graph $P_n$. Then $T_n$ has a vertex that is adjacent to at least three vertices, and %. Let $k$ be a vertex of $T_n$ that is adjacent to vertices $u$, $v$, and $w$. Then 
 the graph induced by them is a subtree $K_{1,3}$ that is isometrically embedded in $T_n$. By Theorems \ref{qeccbip} and \ref{iembedded},
\begin{equation}\label{fi}
	-\frac{1}{2}=\qec(K_{1,3})\leq \qec(T_n).
\end{equation}
Also, from \cite[Section 4]{baskoro}, we have
\begin{equation}\label{si}
	\qec(P_2)<QEC(P_3)<\cdots <\qec(P_n)<\qec(P_{n+1})<\cdots \longrightarrow -\frac{1}{2}.
\end{equation}
From equation (\ref{fi}) and (\ref{si}), we have $\qec(P_n)< \qec(T_n)$.

Now, we prove the right inequality. Let $T_n$ be a tree with $n$ vertices distinct from the star graph $K_{1,{n-1}}$. We claim that $\qec(T_n)< \qec(K_{1,n-1})$. We prove this by induction on $n\geq 4$. In the base case, the only tree other than the star $K_{1,3}$ on 4 vertices is $P_4$ and $\qec(P_4)< -\frac{1}{2} =\qec(K_{1,3})$. % Thus the statement holds for the base case $n=4$. Let $r\geq4$ be an integer and suppose that  the statement is true for all trees with at most $r$ vertices.
For the induction step, let $T_{r+1}$ be a tree on $(r+1)$ vertices other than $K_{1,r}$, with $r\geq 4$. Then $T_{r+1}$ has a pendant vertex $v$ such that the tree $T_{r+1}-v=T_r$ is different from the star graph $K_{1,{r-1}}$. Note that $T_{r+1}=K_2\star T_r$. %, where $H$ is a tree on $n$ vertices other than $K_{1,{n-1}}$.
 By the induction hypothesis,
\begin{equation}\label{i1}
	\qec(T_r)< \qec(K_{1,r-1})=-\frac{2}{r}.
\end{equation}
Note that $\qec(K_2)=-1$. By Proposition \ref{star},
\begin{equation}\label{i2}
	\qec(T_{r+1})=\qec(K_2\star T_r) \leq -M(1,r-1;1,-\qec(T_r)).
\end{equation}
%By inequality (\ref{i1}), $(1,-\qec(H))\geq (1,\frac{2}{n})$. Then, by \cite[Proposition 3.2]{mo},
Since $-\qec(T_r)> \frac{2}{r}$, by \cite[Theorem 3.5 and Proposition 2.3]{mo},
\begin{equation}\label{i3}
	M(1,r-1;1,\frac{2}{r})< M(1,r-1;1,-\qec(T_r)).
\end{equation}
By \cite[Theorem 3.5]{mo}, $M(1,r-1;1,\frac{2}{r})$ is the minimal solution of
\[	\frac{1}{1\cdot 1+1-\lambda } + \frac{r-1}{\frac{2}{r}\cdot (r-1)+\frac{2}{r}-\lambda } = \frac{1}{\lambda},\]
and a straightforward calculation shows that $M(1,r-1;1,\frac{2}{r})=\lambda= \frac{2}{r+1}$.
Using (\ref{i2}) and (\ref{i3}), we get
\begin{center}
	$\qec(T_{r+1}) < -M(1,r-1;1,\frac{2}{r})=-\frac{2}{r+1}=\qec(K_{1,r})$.
\end{center}
This completes the induction step.
\end{proof}
\section{Quadratic Embedding Constants  and distance spectra of graphs}\label{specqec}

We next prove Theorem \ref{sns}. The proof employs the following preliminary result that computes the quadratic embedding constant using Lagrange multipliers.% To proceed, we need an interesting method of calculating the quadratic embedding constant using Lagrange multipliers.
\begin{prop}\cite[Proposition 4.1]{zakiyyah} \label{qecformula}
Let $G=(V,E)$ be a graph on $n\geq 3$ vertices. Identifying $C(V)$ with $\mathbb{R}^n$, let $\mathcal{S}(D_G)$ be the set of all stationary points $(f,\lambda ,\mu)\in \mathbb{R}^n\times \mathbb{R} \times \mathbb{R}$ of
\begin{center}
$\phi (f,\lambda ,\mu)=\langle f,D_Gf \rangle -\lambda (\langle f,f \rangle -1)-\mu \langle e,f \rangle ,$
\end{center}
or equivalently, $(f,\lambda ,\mu)\in \mathbb{R}^n\times \mathbb{R} \times \mathbb{R}$ satisfying  the system of the following three equations
\begin{center}
$(D_G-\lambda I)f=\frac{\mu}{2}\ e$,\ \ \ \  $\langle f,f \rangle =1$\ \  and\ \  $\langle e,f \rangle =0.$
\end{center}
Then $\mathcal{S}(D_G)$ is nonempty and
\begin{center}
$\qec(G) = \max \{\lambda : (f,\lambda ,\mu)\in \mathcal{S}(D_G) \}$.
\end{center}
\end{prop}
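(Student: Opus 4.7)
The statement is a standard Lagrange-multiplier characterization of the constrained maximum defining $\qec(G)$, so the plan is to verify the three assertions: (a) $\mathcal{S}(D_G)$ is nonempty, (b) the two descriptions of stationary points coincide, and (c) the maximum of $\lambda$ over $\mathcal{S}(D_G)$ equals $\qec(G)$.

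First I would establish the equivalence of the two descriptions of $\mathcal{S}(D_G)$. Computing the three partial gradients of $\phi$ and setting them to zero gives
\[
\nabla_f \phi = 2 D_G f - 2\lambda f - \mu e = 0,\qquad \partial_\lambda \phi = -(\langle f,f\rangle - 1) = 0,\qquad \partial_\mu \phi = -\langle e,f\rangle = 0,
\]
which is exactly the system $(D_G - \lambda I)f = \tfrac{\mu}{2}e$, $\langle f,f\rangle = 1$, $\langle e,f\rangle = 0$. This is a routine computation using symmetry of $D_G$ (so that $\nabla_f \langle f, D_G f\rangle = 2 D_G f$).

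Next I would show $\mathcal{S}(D_G) \neq \emptyset$ and interpret the elements geometrically. The constraint set $K := \{f\in\mathbb{R}^n : \langle f,f\rangle = 1,\ \langle e,f\rangle = 0\}$ is the intersection of the unit sphere with the hyperplane orthogonal to $e$; since $n \geq 3$, $K$ is a compact $(n-2)$-sphere and nonempty. The continuous function $f \mapsto \langle f, D_G f\rangle$ therefore attains its maximum on $K$. At such a maximizer $f_\ast$, the two constraint gradients $2f_\ast$ and $e$ are linearly independent (they cannot be parallel, since $\langle e, f_\ast\rangle = 0$ while $\langle e, e\rangle = n > 0$), so the Lagrange multiplier rule applies: there exist $\lambda, \mu \in \mathbb{R}$ making $(f_\ast,\lambda,\mu)$ a stationary point of $\phi$. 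Hence $\mathcal{S}(D_G)$ is nonempty.

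Finally I would read off the value of the objective at any stationary point. Taking the inner product of $(D_G - \lambda I)f = \tfrac{\mu}{2} e$ with $f$ and using $\langle f,f\rangle = 1$ and $\langle e,f\rangle = 0$ yields
\[
\langle f, D_G f\rangle - \lambda = \tfrac{\mu}{2}\langle e, f\rangle = 0,
\]
so $\langle f, D_G f\rangle = \lambda$ at every point of $\mathcal{S}(D_G)$. Combining this identity with the fact that the maximizer over $K$ lies in $\mathcal{S}(D_G)$ immediately gives $\qec(G) = \max\{\lambda : (f,\lambda,\mu)\in \mathcal{S}(D_G)\}$. I do not expect any real obstacle here; the only point that deserves a careful sentence is the verification that the constraint gradients at a maximizer are independent (so that the Lagrange multiplier rule is legitimately applicable), which is ensured by $\langle e,f\rangle = 0$.
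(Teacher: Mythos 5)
Your argument is correct and is the standard Lagrange-multiplier proof; note that the paper itself does not reprove this statement but imports it verbatim from the cited source (Obata--Zakiyyah, Proposition 4.1), whose proof proceeds exactly as you do: compactness of the constraint sphere gives a maximizer, independence of the constraint gradients (which you rightly flag as the one point needing care) legitimizes the multiplier rule, and pairing $(D_G-\lambda I)f=\tfrac{\mu}{2}e$ with $f$ yields $\langle f,D_Gf\rangle=\lambda$ at every stationary point. The only clause worth adding is the converse inequality: since every element of $\mathcal{S}(D_G)$ satisfies the constraints, $\lambda=\langle f,D_Gf\rangle\leq \qec(G)$, which together with your observation that the maximizer is stationary gives equality in both directions.
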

\begin{proof}[Proof of Theorem~\ref{sns}]
Suppose that $\qec(G)=0$. Then $\lambda _2(G)\leq 0$ by \eqref{minmax}. We claim that $\lambda _2(G) =0$. Indeed, suppose $\lambda _2(G) \neq 0$. Then $D_G$ is a nonsingular matrix by \eqref{minmax}. By Proposition \ref{qecformula}, there exist $f \in \mathbb{R}^n$ and $\mu \in \mathbb{R}$ such that
\begin{center}
$(D_G-0\cdot I)f=\frac{\mu}{2}\ e$,\ \ \ \  $\langle f,f \rangle =1$\ \  and\ \  $\langle e,f \rangle =0$.
\end{center}
Since $D_G$ is nonsingular, $f=\frac{\mu}{2}D_G^{-1}e$ and $\frac{\mu ^2}{4}\langle D_G^{-1}e,D_G^{-1}e \rangle =1$. Thus
\begin{equation}\label{zero}
	\mu \neq 0 ~~~\mbox{and}~~~ \langle e,D_G^{-1}e \rangle =0.
\end{equation}
By \cite[Lemma 8.3]{bapat} and using \eqref{zero}, we have $\det(D_G+J)=\det(D_G)$. Let $\beta_1\geq \cdots \geq \beta_n$ be the eigenvalues of $D_G+J$. Since $D_G$ is symmetric, by \cite[Corollary 4.3.12]{horn}, $\lambda _i (G)\leq \beta _i$ for all $1\leq i \leq n$. % Also, $D_G$ is nonsingular implies $\lambda _i (G),\mu _i\neq 0$ for all $1\leq i \leq n$.
  Let $x>0$ be the Perron vector of $D_G$. By the Perron--Frobenius Theorem, any eigenvector corresponding to $\lambda _1(G)$ is a constant multiple of $x$. Let $y$ be an arbitrary eigenvector corresponding to $\lambda _1(G)$. Then
  \[D_{G} y=\lambda _1(G) y  ~~~~~~\mbox{~~~~~and~~~~~}~~~~~ \sum _{i=1}^ny_i\neq 0.\]
Since  $Jy\neq 0$, again by \cite[Corollary 4.3.12]{horn}, $\lambda _1(G)< \beta _1$. Thus \[\det(D_G)=\prod _{i=1}^n\lambda_i (G)\neq \prod _{i=1}^n\beta_i=\det(D_G+J),\] a contradiction. Hence $\lambda _2(G)$ must be zero.

To prove the converse, let $\lambda _2(G)=0$. Then $ \qec(G)\geq 0$. If $\qec(G)=0$, then we are done. Suppose that $\qec(G)>0$.  Since $0=\lambda _2(G)<\qec(G)<\lambda _1(G)$, the matrix $D_G-\qec(G) I$ is invertible. Again, by Proposition \ref{qecformula}, there exist  $f \in \mathbb{R}^n$ and $\mu \in \mathbb{R}$ 
 such that
\begin{center}
$(D_G-\qec(G) I)f=\frac{\mu}{2}\ e$,\ \ \ \  $\langle f,f \rangle =1$\ \  and\ \  $\langle e,f \rangle =0,$
\end{center}
and so $\det(D_G-\qec(G) I+J)=\det(D_G-\qec(G) I)$. The same argument can be adapted, as in the first half of the proof, to show that $\lambda _i (G) \leq \beta _i $ for all $2\leq i\leq n$ and $\lambda _1(G)<\beta _1 $. Hence
 \[\det(D_G-\qec(G) I)=\prod _{i=1}^n(\lambda_i(G)-\qec(G) )\neq \prod _{i=1}^n(\beta_i-\qec(G) )=\det(D_G-\qec(G) I+J),\]  a contradiction. Thus $\qec(G)=0$.
\end{proof}
As a consequence, we characterize all singular graphs of QE class.
\begin{cor}
Let $G$ be a graph of QE class. Then $D_G$ is singular if and only if $\qec(G)=0$.
\end{cor}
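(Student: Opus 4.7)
The plan is to derive the corollary as a direct consequence of Theorem~\ref{sns} together with the min--max bound \eqref{minmax} and Schoenberg's characterization (Theorem~\ref{schointro}). No fresh analytic input beyond what has already been established should be needed; the argument is essentially an observation about where the zero eigenvalue of $D_G$ can sit.

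For the forward direction, I would suppose that $G$ is of QE class with $D_G$ singular. Schoenberg's theorem gives $\qec(G)\leq 0$, and \eqref{minmax} then yields $\lambda_2(G)\leq \qec(G)\leq 0$. On the other hand, since $G$ is connected with at least two vertices, $D_G$ is a nonzero irreducible nonnegative symmetric matrix, so the Perron--Frobenius theorem gives $\lambda_1(G)>0$. Singularity of $D_G$ means $0$ is an eigenvalue, and since $\lambda_1(G)>0$ strictly, the zero eigenvalue must appear among $\lambda_2(G),\ldots,\lambda_n(G)$; as these are arranged in decreasing order, this forces $\lambda_2(G)\geq 0$. Combining, $\lambda_2(G)=0$, and Theorem~\ref{sns} concludes $\qec(G)=0$.

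For the converse, assume $\qec(G)=0$. Theorem~\ref{sns} immediately gives $\lambda_2(G)=0$, so $0$ is an eigenvalue of $D_G$, i.e.\ $D_G$ is singular.

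There is no real obstacle here: the substantive content was isolated in Theorem~\ref{sns}, and the only additional ingredients are the sign of the Perron eigenvalue and the ordering of eigenvalues, both of which are standard. The only bookkeeping point to be careful about is excluding the trivial case of a single-vertex graph (where $D_G=0$ is singular but the $\qec$ is not defined in the usual sense); since the preceding theorem is stated for $n\geq 2$, the hypothesis of the corollary should be read likewise.
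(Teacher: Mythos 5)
Your proof is correct and follows essentially the same route as the paper: Schoenberg's theorem gives $\qec(G)\leq 0$, the chain $\lambda_2(G)\leq \qec(G)<\lambda_1(G)$ together with $\lambda_1(G)>0$ shows that singularity of $D_G$ is equivalent to $\lambda_2(G)=0$, and Theorem~\ref{sns} closes the loop. Your version merely spells out the Perron--Frobenius justification for $\lambda_1(G)>0$, which the paper leaves implicit.
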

\begin{proof}
	Suppose  that $G$ is a graph of QE class. By Schoenberg’s Theorem \ref{schointro}, $\qec(G)\leq 0$. Since  $\lambda _2 (G)\leq \qec(G)<\lambda _1(G)$, $\det(D_G)=0$ if and only if $\lambda _2(G) = 0$. Hence the result follows by Theorem \ref{sns}.
\end{proof}
Next, we will look at graphs with nonsingular distance matrices. In general, the quadratic embedding constant is not a distance eigenvalue for every graph with a nonsingular distance matrix, e.g.,  complete bipartite graphs $K_{m,n}$ ($m \neq n$), path graphs with an odd number of vertices, star graphs, etc. In Theorem \ref{dstareig}, we identify a subclass of trees,  the double star graphs for which the QEC is an eigenvalue of the distance matrix. To prove Theorem \ref{dstareig}, we first recall the definition of the double star graph.
\begin{defn}\label{dstar}
	A double star graph is a tree denoted by $S_{m,n}$, obtained by adding an edge between the center vertices of the star graphs $K_{1,m-1}$ and  $K_{1,n-1}$.
\end{defn}
\begin{ex}
	Example of a double star graph on $8$ vertices.
	\begin{figure}[H]
		\begin{center}
			\begin{tikzpicture}[scale=1]
				\draw  (4.,3.)-- (3.,4.);
				\draw  (4.,3.)-- (3.,2.);
				\draw  (4.,3.)-- (6.,3.);
				\draw  (6.,3.)-- (6.,4.);
				\draw  (6.,3.)-- (6.,2.);
				\draw  (6.,3.)-- (7.,2.);
				\draw  (6.,3.)-- (7.,4.);
				\begin{scriptsize}
					\fill (4.,3.) circle (2.5pt);
					\draw (4.14,3.32) node {$1$};
					\fill (6.,3.) circle (2.5pt);
					\draw (5.7,3.32) node {$4$};
					\fill (3.,4.) circle (2.5pt);
					\draw (3.,4.4) node {$2$};
					\fill (3.,2.) circle (2.5pt);
					\draw (3.,1.6) node {$3$};
					\fill (7.,4.) circle (2.5pt);
					\draw (7.,4.4) node {$6$};
					\fill (7.,2.) circle (2.5pt);
					\draw (7.,1.65) node {$7$};
					\fill (6.,4.) circle (2.5pt);
					\draw (6.,4.4) node {$5$};
					\fill (6.,2.) circle (2.5pt);
					\draw (6.,1.65) node {$8$};
				\end{scriptsize}
			\end{tikzpicture}
		\end{center}
		\caption{$S_{3,5}$}
	\end{figure}
\end{ex}
\begin{proof}[Proof of Theorem~\ref{dstareig}]
	Note that $S_{n,n}=K_2\{K_{1,n-1}\}$ (see Definition \ref{defncluster}). By Theorem \ref{clustercomplete},
	\[\qec(S_{n,n})=\frac{-(n+2)+\sqrt{(n+2)^2-8}}{2}.\]
	Next, we show that $\qec(S_{n,n})$ is an eigenvalue of $D_{S_{n,n}}$. Let $\{1, 2, \ldots n, n+1, \ldots ,2n\}$ be the vertex set of $S_{n,n}$ such that one $K_{1,n-1}$ is induced by $\{1,2,\ldots ,n\}$ with the vertex 1 as its centre and the other $K_{1,n-1}$ is induced by vertices $\{n+1,n+2,\ldots ,2n\}$ with the centre vertex $n+1$. Then  $D_{S_{n,n}}=\begin{pmatrix}
		P & Q \\
		Q & P\\
	\end{pmatrix}$, where $P=(p_{ij}),Q=(q_{ij})\in \mathbb{R}^{n\times n}$ are two symmetric matrices of the form
	\begin{center}
		$p_{ij}=\left\{\begin{array}{cl} 0,&\mbox{if }i=j,\\1,&\mbox{if }i=1\mbox{ and }2\leq j\leq  n,\\2,&\mbox{otherwise.}\end{array}\right.$\ \ \ \ 
		and\ \ \ \ 
		$q_{ij}=\left\{\begin{array}{cl} 1,&\mbox{if }i=j=1,\\2,&\mbox{if }i=1\mbox{ and }2\leq j\leq  n,\\3,&\mbox{otherwise.}\end{array}\right.$
	\end{center}
	Note that
	\[q_{ij}-p_{ij}=\left\{\begin{array}{cl} 3,&\mbox{if }i=j\mbox{ and }2\leq i\leq  n,\\1,&\mbox{otherwise.}\end{array}\right.\]
	Define the vectors $g$ and $y$ as in \eqref{dstareq}. Then \[(Q-P)y=\frac{-(n+2)+\sqrt{(n+2)^2-8}}{2}\Big (-\frac{n+\sqrt{(n+2)^2-8}}{2},1,1,\ldots ,1 \Big )^T\] and 
	\[D_{S_{n,n}}g=\begin{pmatrix}
		(P-Q)y \\
		(Q-P)y \\
	\end{pmatrix}=\frac{-(n+2)+\sqrt{(n+2)^2-8}}{2}
	\begin{pmatrix}
		\ \  y \\
		-y \\
	\end{pmatrix}=\qec(S_{n,n})\ g.\]
Hence $\qec(S_{n,n})$ is an eigenvalue of $D_{S_{n,n}}$. Since $\lambda _2(S_{n,n})\leq \qec(S_{n,n})< \lambda _1(S_{n,n})$, we have
\[\lambda _2(S_{n,n})=\qec(S_{n,n}) \mbox{~~~~and~~~~}\left\langle \frac{g}{\parallel g \parallel},D_{S_{n,n}}\frac{g}{\parallel g \parallel} \right\rangle =\qec(S_{n,n}).\qedhere\]
\end{proof}

In the last part of this section, we provide the quadratic embedding constant of a general double star graph $S_{m,n}$. Note that $\qec(S_{m,n})$ is the maximum of $\langle f,D_{S_{m,n}} f \rangle$ over the uncountable set of all vectors $f$ in the unit sphere orthogonal to $e$. Our next result avoid this rout  and compute $\qec(S_{m,n})$ via  the largest root of a polynomial of degree 3. The proof requires a basic result that gives us a lower bound for the quadratic embedding constant of any connected graph.
\begin{prop}\label{qeccomplete}\cite{lou}
	Let $G$ be a graph on $n$ vertices. Then, $\qec(G)\geq -1$. Moreover, equality holds if and only if $G$ is a complete graph $K_n$.
\end{prop}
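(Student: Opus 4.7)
The plan is to prove the bound, the forward direction of the equality, and the reverse direction separately, using only elementary test vectors together with Theorem~\ref{iembedded}. First I would establish $\qec(G)\ge -1$ by a direct choice of test vector. Since $G$ is connected on $n\ge 2$ vertices there exists at least one edge $\{u,v\}\in E$. Define $f\in\mathbb{R}^n$ by
\[
f_u=\tfrac{1}{\sqrt{2}},\qquad f_v=-\tfrac{1}{\sqrt{2}},\qquad f_w=0\ \text{ for } w\notin\{u,v\}.
\]
Then $\langle e,f\rangle=0$, $\langle f,f\rangle=1$, and since $d_G(u,v)=1$,
\[
\langle f,D_Gf\rangle = 2f_uf_v\,d_G(u,v) = -1,
\]
so $\qec(G)\ge -1$.

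For the equality, the forward direction is a one-line computation: $D_{K_n}=J-I$, hence for any unit $f$ with $\langle e,f\rangle=0$,
\[
\langle f,D_{K_n}f\rangle = \langle f,Jf\rangle-\langle f,f\rangle = \langle e,f\rangle^2-1 = -1,
\]
which shows $\qec(K_n)=-1$. For the reverse direction, I would assume $G$ is connected on $n\ge 2$ vertices with $G\ne K_n$, and exhibit a strictly better test vector. Since $G\ne K_n$, there exist $u,v\in V$ with $d_G(u,v)\ge 2$; walking along a shortest $u$-$v$ path and selecting two consecutive edges, I can produce vertices $u',w,v'$ satisfying $d_G(u',w)=d_G(w,v')=1$ and $d_G(u',v')=2$. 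The subgraph $P_3$ induced by the edges $u'w$ and $wv'$ is then an isometrically embedded subgraph of $G$, so by Theorem~\ref{iembedded} we obtain $\qec(G)\ge \qec(P_3)$. A short direct calculation (or M\l otkowski's formula $\qec(P_n)=-1/(1+\cos(\pi/n))$ recalled in the introduction) gives $\qec(P_3)=-2/3>-1$, whence $\qec(G)>-1$, finishing the proof of the equivalence.

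The only slightly delicate point is verifying that the three vertices $u',w,v'$ chosen along a shortest $u$-$v$ path do form an isometrically embedded $P_3$, but this is immediate: their pairwise distances $1,1,2$ in $G$ coincide with the distances in $P_3$ because any shorter route between two of them in $G$ would contradict the shortest-path property of the original $u$-$v$ path. Everything else is routine arithmetic, so no serious obstacle is expected.
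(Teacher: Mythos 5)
Your proof is correct and complete: the edge test vector gives $\langle f,D_Gf\rangle=-1$ and hence the lower bound, the identity $\langle f,(J-I)f\rangle=\langle e,f\rangle^2-\langle f,f\rangle=-1$ settles the complete case, and for $G\neq K_n$ the two consecutive edges of a shortest path between vertices at distance $\geq 2$ do form an isometrically embedded $P_3$, so Theorem~\ref{iembedded} together with $\qec(P_3)=-2/3$ gives strict inequality. The paper states this proposition only as a citation to \cite{lou} without proof, so there is nothing to compare against; your argument is the standard one (isometric copies of $K_2$ and $P_3$) and needs no repair.
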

From the above result and Proposition \ref{qecformula}, to calculate the quadratic embedding constant of a graph that is not complete, it is sufficient to take the maximum over $\lambda >-1$ such that $(f,\lambda,\mu)$ is a stationary point of $\phi (f,\lambda,\mu )$.

\begin{thm}\label{gdoublestar}
	Let $m,n\geq 1$ be integers. Then $\qec(S_{m+1,n+1})=-2+2t$, where $t$ is the largest root of the polynomial \[(m+n+2)t^3-(m+n+1-mn)t^2-2mnt +mn=0.\]
\end{thm}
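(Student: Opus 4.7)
The plan is to combine Proposition~\ref{qecformula} with the automorphism symmetry of $S_{m+1,n+1}$ to reduce the problem to a cubic equation in the variable $t=(\lambda+2)/2$. Label the two centers by $u,v$ and the pendants by $u_1,\dots,u_m$ and $v_1,\dots,v_n$. The group $\mathrm{Sym}_m\times\mathrm{Sym}_n$ acts by permuting pendants on each side, giving an orthogonal decomposition $\mathbb{R}^{m+n+2}=W_s\oplus W_n$ into the $4$-dimensional symmetric subspace $W_s$ (vectors constant on each of the orbits $\{u\},\{v\},\{u_i\}_i,\{v_j\}_j$) and its orthogonal complement $W_n$. A direct check using the distance $2$ between co-pendants together with the fact that distances from any co-pendant to each non-pendant vertex are equal shows that $D_G$ preserves this decomposition and acts as $-2I$ on $W_n$. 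Writing $f=f_s+f_n$ with $\|f\|=1$ and $\langle e,f\rangle=0$ (which forces $\langle e,f_s\rangle=0$, since it is automatic on $W_n$), we get $\langle f,D_Gf\rangle=\langle f_s,D_Gf_s\rangle-2\|f_n\|^2$; since $\qec(G)\geq -1>-2$ by Proposition~\ref{qeccomplete}, any maximizer must satisfy $\|f_n\|=0$. Hence $\qec(S_{m+1,n+1})$ equals the maximum $\qec_s$ of $\langle f_s,D_Gf_s\rangle$ over the unit sphere in $W_s\cap e^\perp$.

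Parametrize $f_s\in W_s$ by the four scalars $(a,b,c,d)$ giving its values on the orbits $\{u\},\{v\},\{u_i\}_i,\{v_j\}_j$ respectively. Writing out the Lagrange equations $(D_G-\lambda I)f_s=\tfrac{\mu}{2}e$ at the four orbit representatives, then subtracting the pendant equation from the corresponding center equation on each side and applying the sum-to-zero constraint $a+b+mc+nd=0$, eliminates $\mu$ and collapses after simplification to the clean relations
\[ \lambda a=(\lambda+2)c,\qquad \lambda b=(\lambda+2)d. \]
The substitution $t=(\lambda+2)/2$ then gives $c=\tfrac{t-1}{t}a$ and $d=\tfrac{t-1}{t}b$, so $f_s$ is determined by $(a,b,t)$.

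Substituting these back into the sum-to-zero constraint produces $[t+m(t-1)]a+[t+n(t-1)]b=0$, while the difference of the two center equations becomes $[t(2t-1)+m(t-1)]a=[t(2t-1)+n(t-1)]b$. Equating the two resulting expressions for $a/b$ and clearing denominators produces, after a routine (if tedious) expansion, the cubic $(m+n+2)t^3-(m+n+1-mn)t^2-2mn\,t+mn=0$. The three critical values of $\langle f_s,D_Gf_s\rangle$ on the $2$-sphere in $W_s\cap e^\perp$ are the eigenvalues of the symmetric quadratic form restricted to this $3$-dimensional subspace, so they are real and correspond (via $\lambda=2t-2$) to the three real roots of the cubic; by definition $\qec_s$ is the largest of these, yielding $\qec(S_{m+1,n+1})=2t^\ast-2$ for $t^\ast$ the largest root. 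The main obstacle I anticipate is the careful bookkeeping in the algebraic elimination leading to the cubic; as sanity checks, the cubic is symmetric under $m\leftrightarrow n$ (as required by $S_{m+1,n+1}\cong S_{n+1,m+1}$) and specializing to $m=n$ recovers Theorem~\ref{dstareig}.
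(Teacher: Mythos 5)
Your argument is correct, and it reaches the stated cubic by a genuinely different reduction than the paper's. I checked the elimination: the relations $\lambda a=(\lambda+2)c$ and $\lambda b=(\lambda+2)d$ do drop out of subtracting each pendant equation from its center equation (using $a+b+mc+nd=0$), and the resulting $2\times2$ homogeneous system in $(a,b)$ has determinant equal, up to a factor of $-2$, to $(m+n+2)t^3-(m+n+1-mn)t^2-2mnt+mn$. Where you reduce by symmetry --- the $\mathrm{Sym}_m\times\mathrm{Sym}_n$ orbit decomposition, the fact that $D_G$ acts as $-2I$ on the pendant-antisymmetric part, and the bound $\qec\geq-1>-2$ of Proposition \ref{qeccomplete} to kill the antisymmetric component of any maximizer --- the paper instead expands $\langle f,D_Gf\rangle$ directly and uses the two constraints to rewrite it as $-2+2\Psi(f)$ for an explicit auxiliary quadratic form $\Psi$, then applies the Lagrange-multiplier machinery of Proposition \ref{qecformula} to $\Psi$; its multiplier plays the role of your $t$, and it lands on the same cubic. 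Your route explains conceptually where the shift $-2+2t$ comes from, would transfer to other graphs with large vertex orbits, and gives the realness of the three roots for free from the spectral theorem applied to the compression of $D_G$ to the $3$-dimensional space $W_s\cap e^\perp$, whereas the paper must argue realness separately via sign changes and Descartes' rule. Two pieces of bookkeeping you should spell out, though neither is a gap: the division by $t$ is legitimate because the maximal stationary value has $\lambda\geq-1$, hence $t\geq 1/2$, and $t=0$ is never a root of the cubic (its constant term is $mn\neq0$); and to identify $\qec(S_{m+1,n+1})$ with the \emph{largest} root you also need the converse statement that every root of the cubic produces a stationary point, which holds because a nonzero kernel vector of the $2\times2$ system reassembles into a full solution of the Lagrange system.
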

\begin{proof}
Let $\{1, 2, \ldots , m+1, m+2, \ldots , m+n+2\}$ be the vertex set of $S_{m+1,n+1}$ such that $K_{1,m}$ is induced by $\{1,2,\ldots ,m,m+n+1\}$ with the vertex $m+n+1$ as its centre and $K_{1,n}$ is induced by vertices $\{m+1,m+2,\ldots ,m+n,m+n+2\}$ with the centre vertex $m+n+2$. Then, the distance matrix of $S_{m+1,n+1}$ is an $(m+n+2)\times (m+n+2)$ matrix of the form 
\begin{center}
	$\begin{pmatrix}
		A & P & Q \\
		P^T & B & R \\
		Q^T & R^T & C \\
	\end{pmatrix}$,
\end{center}
where $A=2(J-I)\in \mathbb{R}^{m\times m}$, $B=2(J-I)\in \mathbb{R}^{n\times n}$, $C=(J-I)\in \mathbb{R}^{2\times 2}$, $P=3J\in \mathbb{R}^{m\times n}$,
\begin{center}
	$Q=\begin{pmatrix}
		1 & 2 \\
		1 & 2 \\
		\vdots & \vdots \\
		1 & 2 \\
	\end{pmatrix}\in \mathbb{R}^{m\times 2}$\ \ \ \  and \ \ \ \ 
	$R=\begin{pmatrix}
		2 & 1 \\
		2 & 1 \\
		\vdots & \vdots \\
		2 & 1 \\
	\end{pmatrix}\in \mathbb{R}^{n\times 2}$.
\end{center}
Let $f=(x^T,y^T,z^T)^T\in \mathbb{R}^{(m+n+2)}$, where $x\in \mathbb{R}^{m}$, $y\in \mathbb{R}^{n}$ and $z\in \mathbb{R}^{2}$ such that $\langle e,f\rangle =0$ and $\langle f,f\rangle =1$. Then a straightforward but longwinded calculation shows that
\begin{align*}
	\langle f,D_{S_{m+1,n+1}}f\rangle &= x^TAx+y^TBy+z^TCz+2x^TPy+2x^TQz+2y^TRz \\
	&= 2\Big \{\Big (\sum _{i=1}^m x_i\Big )^2-\sum _{i=1}^m x_i^2\Big \}+2\Big \{\Big (\sum _{j=1}^n y_j\Big )^2-\sum _{j=1}^n y_j^2\Big \}+\Big (\sum _{k=1}^2 z_k\Big )^2-\sum _{k=1}^2 z_k^2\\
	& +6\Big (\sum _{i=1}^m x_i\Big )\Big (\sum _{j=1}^n y_j\Big )+2(z_1+2z_2)\Big (\sum _{i=1}^m x_i\Big )+2(2z_1+z_2)\Big (\sum _{j=1}^n y_j\Big )\\
	&=-2+2\Big \{z_2^2-\Big (\sum _{i=1}^m x_i\Big )^2-2z_1\Big (\sum _{i=1}^m x_i\Big )\Big \}.
\end{align*}
Let $\Psi (x,y,z)=\Psi (x_1,\ldots ,x_m,y_1,\ldots , y_n,z_1,z_2):=z_2^2-\Big (\sum _{i=1}^m x_i\Big )^2-2z_1\Big (\sum _{i=1}^m x_i\Big )$. Then there exists a symmetric matrix $M$ such that $\Psi (x,y,z)=\langle f,Mf \rangle$. Thus
\begin{align*}
	\qec(S_{m+1,n+1}) &= \max\{\langle f,D_{S_{m+1,n+1}}f \rangle ;\  f\in \mathbb{R}^{(m+n+2)\times (m+n+2)},\  \langle f,f \rangle = 1,\  \langle e,f \rangle = 0 \} \\
%	&= -2+2\ \max \{\Psi (x,y,z);\  f= (x,y,z)^T,\  \langle f,f \rangle = 1,\  \langle e,f \rangle = 0\}\\
	&=-2+2\ \max \{\langle f,Mf \rangle;\  f= (x,y,z)^T,\  \langle f,f \rangle = 1,\  \langle e,f \rangle = 0\}.
\end{align*}
We use the Lagrange multiplier method to solve the above maximization problem. Set
\[F(f, \lambda  ,\mu ):=\langle f,Mf \rangle -\lambda  (\langle f,f \rangle - 1)-\mu  \langle e,f \rangle.\] Then all the stationary points of $F(f, \lambda ,\mu )$ satisfy $(M-\lambda I)f=\frac{\mu }{2}$, $\langle f,f \rangle = 1$ and $\langle e,f \rangle =0$. Since $K_{1,{m+1}}$ is isometrically embedded in $S_{{m+1},{n+1}}$, it suffices to consider $\lambda \in \mathbb{R}$ with $\frac{m+1}{m+2}\leq \lambda<1$. By solving $(M-\lambda I)f=\frac{\mu }{2}$, we get
\begin{align*}
	x_i &=\frac{\mu }{2}\cdot \frac{\lambda -1}{m-\lambda (m+\lambda)},\ \   1\leq i \leq m, \\
	y_j &=-\frac{\mu }{2\lambda },\ \   1\leq j \leq n,\\
	z_1 &=\frac{\mu }{2}\cdot \frac{\lambda }{m-\lambda (m+\lambda )},\\
	z_2 &=\frac{\mu }{2-2\lambda }.
\end{align*}
Using $\langle f,f \rangle = 1$ and $\langle e,f \rangle =0$, we have $p(\lambda )=(m+n+2)\lambda ^3-(m+n+1-mn)\lambda ^2-2mn\lambda  +mn=0$. So,
\begin{align*}
	\qec(S_{m+1,n+1}) &= -2+2\ \max \{\langle f,Mf \rangle;\  f= (x,y,z)^T,\  \langle f,f \rangle = 1,\  \langle e,f \rangle = 0\} \\
	&= -2+2\ \max \{\lambda ;\  (M-\lambda I)f=\frac{\mu }{2},\  \langle f,f \rangle = 1,\  \langle e,f \rangle = 0\}\\
	&=-2+2\ \max \{\lambda ;\  p(\lambda )=0\}.
\end{align*}
Next, we claim that the roots of $p(\lambda)=0$ are real. Since $p(0)=mn>0$, $p(1)=1>0$, and $p(\frac{m+n}{m+n+2})=-\Big (\frac{m-n}{m+n+2} \Big )^2\leq 0$, $p(\lambda)$ has at least one positive root. By Descartes' rule of sign,  $p(\lambda)$ has two positive and one negative root. This completes the proof.
\end{proof}

\section{Quadratic Embedding Constant of Cluster of Graphs} \label{clustsec}
In this final section, we prove theorem \ref{clustercomplete} -- deriving the quadratic embedding constants of the clusters of an arbitrary graph with  $K_n$ and $K_{1,n-1}$. First we recall the definition of cluster of two graphs in general.
\begin{defn}\label{defncluster}\cite{zhang}
Let $G_1$ and $G_2$ be two graphs and $V(G_1)$ be the vertex set of $G_1$. The cluster $G_1\{G_2\}$ is obtained by taking one copy of $G_1$ and $\vert V(G_1) \vert$ copies of a rooted graph $G_2$, and by identifying the root {\rm (}should be same for each copy of $G_2${\rm )} of the i-th copy of $G_2$ with the i-th vertex of $G_1$, $i=1,2,\ldots ,\vert V(G_1) \vert$.
\end{defn}
\begin{ex}
Example of a cluster graph with $G_2=K_3$:
\begin{figure}[H]
\begin{center}
\begin{tikzpicture}[scale=1]
\draw  (1.,3.)-- (1.,1.);
\draw  (1.,1.)-- (3.,1.);
\draw  (3.,1.)-- (3.,3.);
\draw  (1.,3.)-- (3.,1.);
\begin{scriptsize}
\fill (1.,3.) circle (2.5pt);
\draw (2.,0.25) node {$G_1$};
\fill (1.,1.) circle (2.5pt);
\fill (3.,3.) circle (2.5pt);
\fill (3.,1.) circle (2.5pt);
\end{scriptsize}
\end{tikzpicture}
\hspace{1cm}
\begin{tikzpicture}[scale=1]
\draw  (3.,3.)-- (4.,4.);
\draw  (4.,4.)-- (5.,3.);
\draw  (5.,3.)-- (3.,3.);
\begin{scriptsize}
\fill (3.,3.) circle (2.5pt);
\draw (4.,2.25) node {$G_2$};
\fill (4.,4.) circle (2.5pt);
\draw (5.,3.) circle (4.5pt);
\fill (5.,3.) circle (2.5pt);
\end{scriptsize}
\end{tikzpicture}
\hspace{2cm}
\begin{tikzpicture}[scale=1]
\draw  (2.,3.)-- (4.,3.);
\draw  (4.,3.)-- (3.,4.);
\draw  (3.,4.)-- (2.,3.);
\draw  (2.,1.)-- (3.,2.);
\draw  (3.,2.)-- (4.,1.);
\draw  (4.,1.)-- (2.,1.);
\draw  (6.,1.)-- (7.,2.);
\draw  (7.,2.)-- (8.,1.);
\draw  (8.,1.)-- (6.,1.);
\draw  (6.,3.)-- (7.,4.);
\draw  (7.,4.)-- (8.,3.);
\draw  (8.,3.)-- (6.,3.);
\draw  (4.,3.)-- (4.,1.);
\draw  (4.,1.)-- (6.,1.);
\draw  (6.,1.)-- (4.,3.);
\draw  (6.,3.)-- (6.,1.);
\begin{scriptsize}
\fill (4.,3.) circle (2.5pt);
\draw (4.35,3.) node {$1$};
\fill (6.,3.) circle (2.5pt);
\draw (5.7,3.) node {$4$};
\fill (4.,1.) circle (2.5pt);
\draw (4.,0.65) node {$2$};
\fill (6.,1.) circle (2.5pt);
\draw (6.,0.65) node {$3$};
\fill (7.,4.) circle (2.5pt);
\draw (7.,4.37) node {$8$};
\fill (8.,3.) circle (2.5pt);
\draw (8.36,3.) node {$12$};
\fill (7.,2.) circle (2.5pt);
\draw (7.,2.37) node {$7$};
\fill (8.,1.) circle (2.5pt);
\draw (8.35,0.65) node {$11$};
\fill (3.,2.) circle (2.5pt);
\draw (3.,2.37) node {$6$};
\fill (2.,1.) circle (2.5pt);
\draw (1.65,0.65) node {$10$};
\fill (2.,3.) circle (2.5pt);
\draw (1.67,3.) node {$9$};
\fill (3.,4.) circle (2.5pt);
\draw (3.,4.37) node {$5$};
\draw (5.,0.25) node {$G_1\{G_2\}$};
\end{scriptsize}
\end{tikzpicture}
\end{center}
\caption{}
\label{figure1}
\end{figure}
\end{ex}

Cluster graphs have important applications in Chemistry -- all composite molecules consisting of some amalgamation over a central submolecule can be understood as generalized cluster graphs. For instance, they can be used to understand some issues in metal-metal interaction in some molecules, since a cluster graph structure can be easily found. In \cite{zhang}, the Kirchhoff index formulae for cluster graphs are presented in terms of the pieces.

To prove Theorem \ref{clustercomplete}, we need a basic result. The following lemma provides two examples of monotonically increasing functions that will be crucial in proving Theorem \ref{clustercomplete}. We only prove the monotonicity of the first function, and that of the second function may be shown similarly.
%\textcolor{blue}{
	\begin{lem}\label{realrelation}
		Let $Y,Z:\mathbb{R}\rightarrow \mathbb{R}$ be two real-valued functions defined by
		% (i) $Y(b)=\frac{(nb-n)+\sqrt{(nb-n)^2+4nb}}{2}$\\
		% (ii) $Z(b)=\frac{(nb-2)+\sqrt{(nb-2)^2+8b}}{2},$
		\begin{center}
			\rm{(i)} $Y(b):=\frac{(nb-n)+\sqrt{(nb-n)^2+4nb}}{2}$ \ \ \ \  and  \ \ \ \  \rm{(ii)} $Z(b):=\frac{(nb-2)+\sqrt{(nb-2)^2+8b}}{2},$
		\end{center}
		where $n\geq 2$ is a natural number. Then $Y$ and $Z$ are monotonically increasing functions.
\end{lem}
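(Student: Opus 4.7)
The plan is to prove $Y$ is monotonically increasing via direct differentiation -- computing $Y'(b)$ and checking its sign on all of $\mathbb{R}$ -- and then to mirror the argument for $Z$ (which, per the statement, need not be written out).

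I would begin by checking that $Y$ is differentiable throughout $\mathbb{R}$ when $n\geq 2$, i.e., that the radicand $R(b):=(nb-n)^2+4nb$ is strictly positive for every $b$. Expanding gives $R(b)=n^2b^2+2n(2-n)b+n^2$, a quadratic in $b$ whose discriminant equals $16n^2(1-n)$; this is strictly negative for $n\geq 2$, and since the leading coefficient $n^2$ is positive, $R(b)>0$ for all $b\in\mathbb{R}$.

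The next step is to compute the derivative and manipulate it into a form whose sign is transparent. A direct chain-rule calculation, followed by factoring out $n/\sqrt{R(b)}$, yields
\[
2Y'(b)=n+\frac{n(nb-n)+2n}{\sqrt{R(b)}}=\frac{n\bigl(\sqrt{R(b)}+(nb-n+2)\bigr)}{\sqrt{R(b)}}.
\]
Since $n>0$ and $\sqrt{R(b)}>0$, proving $Y'(b)>0$ reduces to the single inequality $\sqrt{R(b)}>-(nb-n+2)$. If $nb-n+2\geq 0$ the inequality is immediate; otherwise both sides are nonnegative, and squaring gives $R(b)>(nb-n+2)^2$. Expanding the right-hand side and cancelling the $(nb-n)^2$ terms collapses this to $4(n-1)>0$, which holds precisely because $n\geq 2$.

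The main (mild) obstacle is the sign-of-right-hand-side split required to legitimately square the inequality $\sqrt{R(b)}>-(nb-n+2)$; everything else is routine algebra, and the punchline is the clean cancellation down to $4(n-1)$. The parallel argument for $Z$ follows the identical template: verify $(nb-2)^2+8b>0$, compute $Z'(b)$, factor, and reduce positivity to the inequality $n\sqrt{(nb-2)^2+8b}>-\bigl(n(nb-2)+4\bigr)$, which after squaring collapses to $16(n-1)>0$.
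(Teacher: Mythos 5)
Your proof is correct. It takes a somewhat different route from the paper's: you differentiate and check $Y'(b)>0$ pointwise, whereas the paper avoids calculus and works directly with the difference $Y(b)-Y(c)$ for $b\geq c$, factoring it as $\frac{n}{2}\cdot\frac{b-c}{F_b+F_c}\bigl(F_b+F_c+b+c-2+\tfrac{4}{n}\bigr)$ with $F_b=\sqrt{(b-1)^2+4b/n}$ and reducing positivity of the last factor to the inequality $F_b>1-b-\tfrac{2}{n}$, established by completing the square. That inequality is, up to rescaling by $n$, exactly your key step $\sqrt{R(b)}>-(nb-n+2)$, and your ``square both sides after a sign split'' argument (collapsing to $4(n-1)>0$) is the same computation as the paper's completed square, which exhibits the gap as $\tfrac{4}{n}-\tfrac{4}{n^2}>0$. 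So the two proofs share their essential content; yours packages it via the derivative (and incidentally yields strict monotonicity), while the paper's difference-quotient version is marginally more self-contained since it never needs to justify differentiability -- though your verification that the radicand has negative discriminant handles that point cleanly. Your stated reduction for $Z$ to $16(n-1)>0$ also checks out.
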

\begin{proof}
	(i) First, note that the term inside the square root in $Y(b)$ is always positive. For $b\geq 0$, it is obvious, while for $b<0$, 
	\begin{center}
		$(nb-n)^2+4nb = (nb)^2+n^2+2nb(2-n)> 0$.
	\end{center}
	Let $F_b:=\sqrt{(b-1)^2+\frac{4b}{n}}$, where $b\in \mathbb{R}$. Then $F_b>0$ and
	\begin{equation*}
		(b-1)^2+\frac{4b}{n}=\Big (b-1+\frac{2}{n}\Big )^2+(\frac{4}{n}-\frac{4}{n^2}) > \Big (b-1+\frac{2}{n}\Big )^2 = \Big (1-b-\frac{2}{n}\Big )^2,
	\end{equation*}
	which implies $F_b>(1-b-\frac{2}{n})$. Now, for $b\geq c$,
	\begin{align*}
		Y(b)-Y(c) &= \frac{(nb-n)+\sqrt{(nb-n)^2+4nb}}{2} - \frac{(nc-n)+\sqrt{(nc-n)^2+4nc}}{2}\\
		&=\frac{n}{2}\Big \{(b-c)+\frac{(b-1)^2+\frac{4b}{n}-(c-1)^2-\frac{4c}{n}}{\sqrt{(b-1)^2+\frac{4b}{n}}+\sqrt{(c-1)^2+\frac{4c}{n}}} \Big \}\\
		&=\frac{n}{2} \Big (\frac{b-c}{F_b+F_c}\Big ) ( F_b+F_c+b+c-2+\frac{4}{n}) \geq 0.
	\end{align*}
This concludes our proof.
\end{proof}
With Lemma \ref{realrelation} and Proposition \ref{qeccomplete} in hand, we now prove Theorem \ref{clustercomplete}.
\begin{proof}[Proof of Theorem~\ref{clustercomplete}]
(i) For the sake of notional convenience, set $H:=G\{K_n\}$. Let the vertex set of $G$ be $V=\{1,2,\ldots , m\}$ and the vertex set of the $i$-th copy of $K_n$ be $V_i=\{i,i+m,i+2m\ldots , i+(n-1)m\}$, $1\leq i\leq m$ (see the labeling of vertices in Figure \ref{figure1}). If $\Tilde{V}$ is the vertex set of $H$, then $\Tilde{V}=\cup _{i=1}^mV_i$. Then, $D_H$ can be written as
%\begin{center}
\[\begin{pmatrix}
D_G & D_G+J & D_G+J & \hdots & D_G+J \\
D_G+J & D_G+2(J-I) & D_G+2J-I & \hdots & D_G+2J-I \\
D_G+J & D_G+2J-I & D_G+2(J-I) & \hdots & D_G+2J-I \\
\vdots & \vdots & \vdots & \ddots & \vdots \\
%D_G+J & D_G+2J-I & \hdots & D_G+2(J-I) & D_G+2J-I\\
D_G+J & D_G+2J-I & D_G+2J-I &\hdots  & D_G+2(J-I)\\
\end{pmatrix}.\]
%\end{center}
%i.e. $D_H=(D_{ij})$ is a symmetric $n\times n$ block matrix, where $ij$-th block
%$$D_{ij}=\left\{\begin{array}{cl} D_G,&\mbox{if }i=j=1,\\D_G+J,&\mbox{if }i=1\mbox{ and }2\leq j\leq  n,\\D_G+2(J-I),&\mbox{if }i=j\mbox{ and }2\leq j\leq  n,\\D_G+2J-I,&\mbox{otherwise.}\end{array}\right.$$
Let $S(D_H)$ be the set of all $(f,\lambda ,\mu)\in (C(\Tilde{V})\cong \mathbb{R}^{mn})\times \mathbb{R} \times \mathbb{R}$ satisfying
\begin{align}
(D_H-\lambda I)f &=\frac{\mu}{2}\  e \label{mainequation},\\
\langle f,f \rangle &=1 \label{one},\\
\langle e,f \rangle &=0. \label{notone}
\end{align}
By Proposition \ref{qecformula}, $\qec(H)=\max \{\lambda :(f,\lambda ,\mu)\in S(D_H)\}$. Suppose $f=({x^1}^T,{x^2}^T,\ldots ,{x^n}^T)^T$, where $x^i\in \mathbb{R}^m$ for all $1\leq i\leq n$. Using \eqref{notone}, we have $\sum _{i=1}^n\langle e,x^i \rangle =0$ which in turn gives
\begin{equation}\label{jsum}
J(\sum _{i=1}^nx^i)=\sum _{i=1}^n\langle e,x^i \rangle e=0.
\end{equation}
From (\ref{mainequation}), we obtain a system of $n$ equations. Using (\ref{jsum}) and subtracting the $i$-th equation from the first equation for $2\leq i\leq n$, we get the following system of $(n-1)$ equations:
\begin{align*}
(\lambda +2)x^2+x^3+\cdots +x^n=\lambda x^1,\\
x^2+(\lambda +2)x^3+\cdots +x^n=\lambda x^1,\\
\hdots \hdots \hdots \hdots \hdots \hdots \hdots \hdots \hdots \hdots\\
x^2+x^3+\cdots +(\lambda +2)x^n=\lambda x^1.
\end{align*}
By solving the above system of linear equations, we get
\begin{equation}\label{jxzero}
x^2=x^3=\cdots =x^n=\frac{\lambda}{\lambda +n}x^1.
\end{equation}
From (\ref{jsum}) and (\ref{jxzero}), $\frac{n\lambda +n}{\lambda +n}Jx^1=0$ which implies $Jx^1=0$, because $\lambda >-1$ (by the discussion immediately following Proposition \ref{qeccomplete}). Using the aforementioned observations, we can rewrite equations [\ref{mainequation}--\ref{notone}] as follows:
\begin{eqnarray}
	\Big \{\Big (1+(n-1)\frac{\lambda}{\lambda +n} \Big )D_G-\lambda I \Big \}x^1&=&\frac{\mu}{2}\  e \label{secondmain}\\
	\langle x^1,x^1 \rangle &=&\frac{1}{1+(n-1)\Big (\frac{\lambda}{\lambda +n} \Big )^2 }\label{secondone}\\
	\langle e,x^1 \rangle& =&0. \label{secondnotone}
\end{eqnarray}
On the other hand, to calculate $\qec(G)$, consider the set $S(D_G)$ of all the stationary points $(x,a,\gamma)\in (C(V)\cong \mathbb{R}^m)\times \mathbb{R}\times \mathbb{R}$ satisfying
\begin{equation}\label{normalmain}
(D_G-aI)x=\frac{\eta}{2}\ e,\ \ \ \  \langle x,x \rangle =1,\ \ \ \  \langle e,x \rangle =0.
\end{equation}
By looking at equations [\ref{secondmain}--\ref{normalmain}], the following relations
\begin{align*}
\lambda = \frac{(na-n)+\sqrt{(na-n)^2+4na}}{2}\\
x^1= \frac{1}{\sqrt{1+(n-1)\Big (\frac{\lambda}{\lambda +n} \Big )^2 }}\  x=\frac{\lambda +n}{\sqrt{(\lambda +n)^2+(n-1)\lambda ^2}}\ x,\\
\mu =\frac{1+(n-1)\Big (\frac{\lambda}{\lambda +n} \Big )}{\sqrt{1+(n-1)\Big (\frac{\lambda}{\lambda +n} \Big )^2 }}\  \eta =\frac{n(\lambda +1)}{\sqrt{(\lambda +n)^2+(n-1)\lambda ^2}}\  \eta ,
\end{align*}
give us a one to one correspondence between $S(D_G)$ and $S(D_H)$. So,
\begin{align*}
\qec(H) & = \max \{\lambda : (({x^1}^T,{x^2}^T,\ldots ,{x^n}^T)^T,\lambda ,\mu)\in S(D_H)\} \\
 & = \max \Big \{\frac{(na-n)+\sqrt{(na-n)^2+4na}}{2} : (x,a ,\eta)\in S(D_G)\Big \}.
\end{align*}
Now $a\leq \qec (G)$, and in particular for $\qec (G)$, there exist $\Tilde{x}\in \mathbb{R}^m$ and $\Tilde{\eta}\in \mathbb{R}$ such that $(\Tilde{x} ,\qec (G),\Tilde{\eta})\in S(D_G)$. Thus
\begin{equation}\label{lrelation}
\frac{(n\qec (G)-n)+\sqrt{(n\qec (G)-n)^2+4n\qec (G)}}{2} \leq \qec(H).
\end{equation}
Also, by Lemma \ref{realrelation},  $a\leq Q$ implies
\begin{equation}\label{rrelation}
\qec(H)\leq \frac{(n\qec (G)-n)+\sqrt{(n\qec (G)-n)^2+4n\qec (G)}}{2}.
\end{equation}
Hence, the conclusion follows from (\ref{lrelation}) and (\ref{rrelation}).

	(ii) Set $L:=G\{K_{1,n-1}\}$. Label the vertices of $L$ as in the preceding part of this proof. Then
\begin{center}
	$D_L=\begin{pmatrix}
		D_G & D_G+J & D_G+J & \hdots & D_G+J \\
		D_G+J & D_G+2(J-I) & D_G+2J & \hdots & D_G+2J \\
		D_G+J & D_G+2J & D_G+2(J-I) & \hdots & D_G+2J \\
		\vdots & \vdots & \vdots & \ddots & \vdots \\
		%D_G+J & D_G+2J & \hdots & D_G+2(J-I) & D_G+2J\\
		D_G+J & D_G+2J & D_G+2J &\hdots  & D_G+2(J-I)\\
	\end{pmatrix}.$
\end{center}
Let $S(D_L)$ be the set of all stationary points $(f,\lambda ,\mu)\in \mathbb{R}^{mn}\times \mathbb{R} \times \mathbb{R}$ satisfying
\begin{equation}\label{mainstar}
	(D_L-\lambda I)f=\frac{\mu}{2}\ e,\ \ \ \  \langle f,f \rangle =1,\ \ \ \  \langle e,f \rangle =0,
\end{equation}
and let $S(D_G)$ be the set of all stationary points $(x,a ,\eta)\in \mathbb{R}^m\times \mathbb{R} \times \mathbb{R}$ satisfying
\begin{equation}
	(D_G-aI)x=\frac{\eta}{2}\ e,\ \ \ \  \langle x,x \rangle =1,\ \ \ \  \langle e,x \rangle =0.
\end{equation}
Here, note that $K_{1,n}$ is isometrically embedded in $L$. So, $-\frac{2}{n+1}\leq \lambda$. Thus, it suffices to consider all stationary points in $S(D_L)$ with $-\frac{2}{n}<\lambda $. %Thus, considering all stationary points in $S(D_L)$ with $-\frac{2}{n}<\lambda $ is sufficient.
By a similar argument as in the first part of the proof, we can show that \eqref{mainstar} reduces to the following equations:
\begin{eqnarray}
	\Big \{\Big (1+(n-1)\frac{\lambda}{\lambda +2} \Big )D_G-\lambda I \Big \}x^1&=&\frac{\mu}{2}\  e \nonumber\\
	\langle x^1,x^1 \rangle &=&\frac{1}{1+(n-1)\Big (\frac{\lambda}{\lambda +2} \Big )^2 }\nonumber\\
	\langle e,x^1 \rangle &=&0,\nonumber
\end{eqnarray}
where $f=({x^1}^T,{x^2}^T,\ldots ,{x^n}^T)^T$, $x^i\in \mathbb{R}^m$.
\begin{align*}
	\qec(L) & = \max \{\lambda : (({x^1}^T,{x^2}^T,\ldots ,{x^n}^T)^T,\lambda ,\mu)\in S(D_L)\} \\
	& = \max \Big \{\frac{(na-2)+\sqrt{(na-2)^2+8a}}{2} : (x,a ,\eta)\in S(D_G)\Big \}.
\end{align*}
The remainder of the proof is  same as the last part of the above proof, except for the use of the second part of Lemma \ref{realrelation}.
\end{proof}
%\textcolor{blue}
{\begin{rem}
		Since $\qec(K_n)=-1$ and $\qec(K_{1,n-1})=-\frac{2}{n}$, using Theorem \ref{clustercomplete}, one can verify that $\qec (K_{1,n-1}\{K_m\})=\qec (K_m\{K_{1,n-1}\})$ if and only if $m=n$.
\end{rem}
We conclude by discussing  a couple of applications of Theorem \ref{clustercomplete}. Our first application provides novel families of examples  of graphs of QE class.
\begin{cor}\label{qeclass}
	If $G$ is a graph of QE class, then the graphs $G\{K_n\}$ and $G\{K_{1,n-1}\}$ are of QE class.
\end{cor}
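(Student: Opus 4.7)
The plan is to reduce the corollary to a very short computation by combining Schoenberg's characterization (Theorem~\ref{schointro}) with Theorem~\ref{clustercomplete} and the monotonicity statement of Lemma~\ref{realrelation}. Recall that by Schoenberg, a graph $H$ is of QE class if and only if $\qec(H)\leq 0$. Hence to prove the corollary, it suffices to show that $\qec(G)\leq 0$ implies both $\qec(G\{K_n\})\leq 0$ and $\qec(G\{K_{1,n-1}\})\leq 0$.

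First, I would rewrite the two formulas from Theorem~\ref{clustercomplete} in the language of Lemma~\ref{realrelation}: letting $Y$ and $Z$ be the functions from that lemma, we have
\[
\qec(G\{K_n\})=Y\bigl(\qec(G)\bigr) \qquad \text{and} \qquad \qec(G\{K_{1,n-1}\})=Z\bigl(\qec(G)\bigr).
\]
(Lemma~\ref{realrelation} also assures us that the discriminants inside the square roots are nonnegative, so these expressions are well-defined real numbers for every value of $\qec(G)\in\mathbb{R}$.)

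Next, I would compute the value of each function at $0$ by plugging in: $Y(0)=\frac{-n+\sqrt{n^2}}{2}=0$ and $Z(0)=\frac{-2+\sqrt{4}}{2}=0$. Since Lemma~\ref{realrelation} states that both $Y$ and $Z$ are monotonically increasing on $\mathbb{R}$, we immediately conclude that $Y(b)\leq 0$ and $Z(b)\leq 0$ whenever $b\leq 0$. Applying this with $b=\qec(G)\leq 0$ (which holds by Schoenberg's theorem since $G$ is of QE class) gives $\qec(G\{K_n\})\leq 0$ and $\qec(G\{K_{1,n-1}\})\leq 0$. One final invocation of Schoenberg's theorem in the reverse direction completes the proof.

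There is essentially no obstacle here: all the heavy lifting has already been done in Theorem~\ref{clustercomplete} and Lemma~\ref{realrelation}. The only thing to be careful about is that one genuinely needs the monotonicity statement (and not just the formulas themselves), since \emph{a priori} the expressions $Y(\qec(G))$ and $Z(\qec(G))$ are signed square roots and one must rule out a positive contribution from the radical when $\qec(G)<0$. The monotonicity plus the explicit evaluation at the single point $b=0$ handles exactly this issue in one clean step.
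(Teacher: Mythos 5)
Your proposal is correct and follows essentially the same route as the paper: both arguments apply Schoenberg's criterion, identify the formulas of Theorem~\ref{clustercomplete} with the functions $Y$ and $Z$ of Lemma~\ref{realrelation}, and use monotonicity together with the evaluations $Y(0)=Z(0)=0$ to conclude $\qec(G\{K_n\})\leq 0$ and $\qec(G\{K_{1,n-1}\})\leq 0$. No gaps.
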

\begin{proof}
	If $G$ is a graph of QE class then $\qec(G) \leq 0$. By Lemma \ref{realrelation}, 
	\begin{align*}
		\frac{(n\qec(G)-n)+\sqrt{(n\qec(G)-n)^2+4n\qec(G)}}{2} & \leq  \frac{-n+\sqrt{n^2}}{2} \\
		\Rightarrow \qec(G\{K_n\}) & \leq 0;
	\end{align*}
	and
	\begin{align*}
		\frac{(n\qec(G)-2)+\sqrt{(n\qec(G)-2)^2+8\qec(G)}}{2} & \leq  \frac{-2+2}{2} \\
		\Rightarrow \qec(G\{K_{1,n-1}\}) & \leq 0.
	\end{align*}
	Thus $G\{K_n\}$ and $G\{K_{1,n-1}\}$ are of QE class.
\end{proof}
The next application provides the quadratic embedding constant of a simple corona graph.

\begin{defn}
Let $G_1$ and $G_2$ be two graphs defined on disjoint sets of $m$ and $n$ vertices, respectively. The corona $G_1\circ G_2$ of $G_1$ and $G_2$ is defined as the graph obtained by taking one copy of $G_1$ and $m$ copies of $G_2$, and then joining the $i$th vertex of $G_1$ to every vertex in the $i$th copy of $G_2$. If $G_2=K_1$, then the corona $G\circ K_1$ is called a simple corona graph.
\end{defn}
\begin{cor}\label{cgraph}
Let $G\circ K_1$ be a simple corona graph. Then,
\begin{equation*}
\qec(G\circ K_1)=\qec(G)-1+\sqrt{1+(\qec(G))^2}.
\end{equation*}
\end{cor}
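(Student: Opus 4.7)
The plan is to observe that the simple corona graph $G\circ K_1$ is exactly the cluster $G\{K_2\}$ in which the root of each pendant copy of $K_2$ is identified with the corresponding vertex of $G$. Indeed, the construction of $G\circ K_1$ attaches a single new pendant vertex to each vertex of $G$, which is the same as taking $|V(G)|$ copies of $K_2$ and identifying a root of the $i$-th copy with the $i$-th vertex of $G$ (the choice of root is immaterial since $K_2$ is vertex-transitive). This identification is the main conceptual step; everything else is symbolic simplification.

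Having made this identification, I would apply Theorem~\ref{clustercomplete}(i) with $n=2$, since $K_2$ is the complete graph on $2$ vertices. Writing $Q:=\qec(G)$ for brevity, the formula gives
\[
\qec(G\{K_2\})=\frac{(2Q-2)+\sqrt{(2Q-2)^2+8Q}}{2}.
\]
I would then simplify the discriminant by noting that
\[
(2Q-2)^2+8Q=4Q^2-8Q+4+8Q=4Q^2+4=4(Q^2+1),
\]
so the square root equals $2\sqrt{Q^2+1}$, and the whole expression collapses to $Q-1+\sqrt{1+Q^2}$, which is precisely the claimed formula.

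As a consistency check, one could equally apply part (ii) of Theorem~\ref{clustercomplete} with $n=2$, since $K_{1,1}=K_2$; the identical discriminant $(2Q-2)^2+8Q$ appears, confirming the answer. There is no genuine obstacle here: once the structural identification $G\circ K_1 = G\{K_2\}$ is recognized, the corollary reduces to an algebraic simplification of the cluster formula in the special case $n=2$.
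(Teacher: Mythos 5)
Your proposal is correct and is exactly the argument the paper intends: the paper states Corollary \ref{cgraph} as an immediate application of Theorem \ref{clustercomplete}, and the identification $G\circ K_1=G\{K_2\}$ followed by the substitution $n=2$ and the simplification $(2Q-2)^2+8Q=4(Q^2+1)$ is the whole proof. Your observation that parts (i) and (ii) of Theorem \ref{clustercomplete} coincide for $n=2$ (since $K_2=K_{1,1}$) is a nice sanity check but adds nothing beyond what the paper does.
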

Observe that for any tree $T$, $T\{K_{1,n-1}\}$ is again a tree. As a final application of Theorem \ref{clustercomplete}, we give a better lower and upper bound for this class of trees than Theorem \ref{ulbound}.
\begin{cor}
	Let $T$ be a tree on $m$ vertices. Then
	\[ \qec(P_{mn})<\qec(P_m\{K_{1,n-1}\}) \leq \qec(T\{K_{1,n-1}\}) \leq \qec(K_{1,m-1}\{K_{1,n-1}\})< \qec(K_{1,mn-1}).\]
%	\begin{equation}\label{ulbound2}
%		\qec(P_{mn})<\qec(P_m\{K_{1,n-1}\}) \leq \qec(T\{K_{1,n-1}\}) \leq \qec(K_{1,m-1}\{K_{1,n-1}\})< \qec(K_{1,mn-1}).
%	\end{equation}
\end{cor}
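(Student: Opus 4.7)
The plan is to deduce the chain of inequalities by composing Theorem \ref{ulbound}, Theorem \ref{clustercomplete}(ii), and the monotonicity from Lemma \ref{realrelation}(ii). For brevity, let $Z(b):=\frac{(nb-2)+\sqrt{(nb-2)^2+8b}}{2}$, so that Theorem \ref{clustercomplete}(ii) can be written as $\qec(G\{K_{1,n-1}\})=Z(\qec(G))$ for every graph $G$. Since the cluster of two trees is again a tree, each of $T\{K_{1,n-1}\}$, $P_m\{K_{1,n-1}\}$, and $K_{1,m-1}\{K_{1,n-1}\}$ is a tree on $mn$ vertices.

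For the two middle inequalities, I would first invoke Theorem \ref{ulbound} on the tree $T$ (on $m$ vertices) to obtain $\qec(P_m)\leq \qec(T)\leq \qec(K_{1,m-1})$, and then push this chain through the function $Z$, which is monotonically increasing by Lemma \ref{realrelation}(ii). Using the identity $\qec(G\{K_{1,n-1}\})=Z(\qec(G))$ at each entry yields precisely
\[
\qec(P_m\{K_{1,n-1}\})\;\leq\;\qec(T\{K_{1,n-1}\})\;\leq\;\qec(K_{1,m-1}\{K_{1,n-1}\}).
\]

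For the outer strict inequalities, I would apply Theorem \ref{ulbound} once more, this time to the trees $P_m\{K_{1,n-1}\}$ and $K_{1,m-1}\{K_{1,n-1}\}$, each on $mn$ vertices. It remains only to verify that these trees are distinct from $P_{mn}$ and $K_{1,mn-1}$ respectively, so that the strict part of Theorem \ref{ulbound} applies. For $P_m\{K_{1,n-1}\}$: any interior vertex of $P_m$ has degree $2$ in $P_m$ and acquires $n-1\geq 1$ additional pendants in the cluster, so its degree in the cluster is at least $n+1\geq 3$; hence the cluster contains a vertex of degree $\geq 3$ and cannot be a path. For $K_{1,m-1}\{K_{1,n-1}\}$: each of the $m-1$ leaves of $K_{1,m-1}$ becomes the center of its own attached copy of $K_{1,n-1}$, so its degree in the cluster is $1+(n-1)=n\geq 2$; the cluster therefore has $m-1$ non-central vertices of degree $\geq 2$ and cannot be the star $K_{1,mn-1}$.

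There is no genuine obstacle here: the argument is a direct composition of tools already established, and the only care required is the brief structural check in the last paragraph that excludes the equality cases of Theorem \ref{ulbound} at the two ends of the chain.
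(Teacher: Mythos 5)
Your proposal is correct and follows the paper's proof exactly: the middle inequalities come from Theorem \ref{ulbound} applied to $T$ pushed through the monotone function of Lemma \ref{realrelation}(ii) via Theorem \ref{clustercomplete}(ii), and the outer strict inequalities come from Theorem \ref{ulbound} applied to the two $mn$-vertex trees; you are in fact more careful than the paper, which never verifies that those trees differ from $P_{mn}$ and $K_{1,mn-1}$. One small caveat: your non-path check uses an \emph{interior} vertex of $P_m$ and hence needs $m\geq 3$; for $m=2$ one should instead note that each root vertex of $P_2\{K_{1,n-1}\}=S_{n,n}$ has degree $n\geq 3$ when $n\geq 3$, while for $m=n=2$ one has $P_2\{K_{1,1}\}=P_4=P_{mn}$ and the leftmost strict inequality actually fails --- an implicit hypothesis (equivalently, $mn\geq 4$ with $(m,n)\neq(2,2)$ for that inequality) that the paper's terser proof also glosses over.
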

\begin{proof}
	Note that the number of vertices in $T\{K_{1,n-1}\}$ is $mn$. By Theorems \ref{ulbound}, \ref{clustercomplete} and Lemma \ref{realrelation},
	\[\qec(P_m\{K_{1,n-1}\}) \leq \qec(T\{K_{1,n-1}\}) \leq \qec(K_{1,m-1}\{K_{1,n-1}\})\]
	%The conclusion follows from Theorems \ref{ulbound}, \ref{clustercomplete} and Lemma \ref{realrelation}. Note that the number of vertices in $T\{K_{1,n-1}\}$ is $mn$.
	Again, by Theorem \ref{ulbound},
	\begin{equation}\label{twoinequality}
		\qec(P_{mn})<\qec(P_m\{K_{1,n-1}\})\ \ \  \mbox{and}\ \ \  \qec(K_{1,m-1}\{K_{1,n-1}\}) < \qec(K_{1,mn-1}).
	\end{equation}
	This concludes the proof.
%	The left inequality in (\ref{twoinequality}) directly follows from the necessary and sufficient condition for the left equality in Theorem \ref{ulbound} and the right inequality follows from a straightforward calculation.
\end{proof}

\section*{Acknowledgments}
We thank Apoorva Khare for a detailed reading of an earlier draft and for providing valuable feedback. P.N. Choudhury was partially supported by INSPIRE Faculty Fellowship research grant DST/INSPIRE/04/2021/002620
(DST, Govt.~of India), and IIT Gandhinagar Internal Project: IP/IITGN/MATH/PNC/2223/25. R. Nandi was supported by IIT Gandhinagar Post-Doctoral Fellowship IP/IITGN/MATH/PC/2223/20.

\end{document}